\newtheorem{Thm}{Theorem}[section]
\newtheorem{Prop}[Thm]{Proposition}
\newtheorem{Cor}[Thm]{Corollary}
\newtheorem{Lem}[Thm]{Lemma}
\theoremstyle{remark}
\theoremstyle{problem}
\newtheorem{Prob}[Thm]{Problem}
\numberwithin{equation}{section}
\begin{document}

\title[Pompeiu problem and discrete groups]
{The Pompeiu problem and discrete groups}

\author[M. J. Puls]{Michael J. Puls}
\address{Department of Mathematics \\
John Jay College-CUNY \\
524 West 59th Street \\
New York, NY 10019 \\
USA}
\email{mpuls@jjay.cuny.edu}
\thanks{The research for this paper was partially supported by PSC-CUNY grant 65364-00 43}

\begin{abstract}
We formulate a version of the Pompeiu problem in the discrete group setting. Necessary and sufficient conditions are given for a finite collection of finite subsets of a discrete abelian group, whose torsion free rank is less than the cardinal of the continuum, to have the Pompeiu property. We also prove a similar result for nonabelian free groups. A sufficient condition is given that guarantees the harmonicity of a function on a nonabelian free group if it satisfies the mean-value property over two spheres. 
\end{abstract}

\keywords{abelian group, free group, harmonic function, left translates, mean-value property, Pompeiu property, radial functions, torsion free rank, zero divisors}
\subjclass[2010]{Primary: 43A15; Secondary: 43A32, 43A70, 43A80, 43A90}

\date{May 3, 2013}
\maketitle

\section{Introduction}\label{Introduction}
Let $\mathbb{C}$ be the complex numbers, $\mathbb{R}$ the real numbers, $\mathbb{Z}$ the integers and $\mathbb{N}$ the natural numbers. Let $2 \leq n \in \mathbb{N}$ and let $\mathcal{K}$ be a finite family of compact subsets of $\mathbb{R}^n$. The Pompeiu problem asks under what conditions on $\mathcal{K}$ will $f = 0$ be the only continuous function on $\mathbb{R}^n$ that satisfies
\begin{equation} 
 \int_{\sigma(K)} f \, dx = 0  \label{eq:contpomp}
\end{equation}
for all $K \in \mathcal{K}$ and for all rigid motions $\sigma$? The answer to this question is no when $\mathcal{K}$ contains only a sphere of positive radius. This led to what is known as the two circle problem, which asks if $\mathcal{K}$ consists of two spheres of positive radius $r_1$ and $r_2$, then is $f = 0$ the only continuous function satisfying (\ref{eq:contpomp})? It turns out that the answer is yes as long as the ratio $r_1/r_2$ avoids a certain exceptional subset of $\mathbb{R}$. See Sections 5 and 6 of the excellent paper \cite{Zalcman80} for the details.

A variant of the Pompeiu problem is the following two circle problem: Let $f$ be a continuous function on $\mathbb{R}^n$, denote by $S_r(x)$ the sphere of radius $r$ centered at $x$. Suppose  
\begin{equation} 
\frac{1}{\mbox{Vol}(S_r(x))} \int_{S_r(x)} f dy = f(x) \label{eq:contmeanvalue}
\end{equation}
for $r = r_1, r_2$ and all $x \in X$. Does this imply $f$ is harmonic on $\mathbb{R}^n$? The answer to this two radius problem is also yes as long as the quotient $r_1/ r_2$ avoids a certain exceptional subset, depending on $n$, of $\mathbb{R}$. For more information see \cite[Section 11]{Zalcman80}. The Pompeiu problem and some of its variations have been studied in various contexts, see \cite{BerensteinZalcman80, CareyKaniuthMoran91, CohenPicardello88, PeyerimhoffSamior10, RawatSitaram97, ScottSitaram88, Zalcman80} and the references therein for more information. The purpose of this study is to investigate (\ref{eq:contpomp}) and (\ref{eq:contmeanvalue}) in the discrete group setting.

For the rest of this paper $G$ will always denote a discrete group. So, the compact subsets of $G$ are precisely the finite subsets of $G$ and all complex-valued functions on $G$ are continuous. Let $\mathcal{K}$ be a finite collection of finite subsets of $G$. We now state the Pompeiu problem for discrete groups with respect to left translations.

\begin{Prob} \label{rightpompeiu}
When is $f = 0$ the only complex-valued function from a given class of functions on $G$ that satisfies
\begin{equation}
 \sum_{x \in gK} f(x) =0 \label{eq:Pompeiu} 
\end{equation}
for all $K \in \mathcal{K}$ and for all $g \in G$?
\end{Prob}

We shall say that $\mathcal{K}$ has the {\em Pompeiu property} with respect to left translations if $f=0$ is the only function from the class of functions under consideration that satisfies (\ref{eq:Pompeiu}). For brevity, we will say that $\mathcal{K}$ has the Pompeiu property when it has the Pompeiu property with respect to left translations. 

Now suppose $G$ is finitely generated with generating set $x_1, \dots ,x_n$. The Cayley graph $\Gamma$ of $G$ is the graph whose vertices are the elements of $G$, and $g, h \in G$ are connected by an edge if and only if $h = gx_i^{\pm 1}$ for some generator $x_i$ of $G$. Note that $\Gamma$ is a connected graph. Now $G$ is a metric space via the shortest path metric on $\Gamma$. That is, if $x, y \in G$, then $d(x,y)$ equals the number of edges of the shortest path in $\Gamma$ joining $x$ and $y$. Let $x \in G$ and let $n \in \mathbb{N}$. The sphere of radius $n$ centered at $x$ is defined to be
\begin{equation}
S_n (x) = \{ w \in G \mid d(x,w) =n \}. \label{eq:defsphere}
\end{equation}
The cardinality of $S_n(x)$ will be denoted by $\vert S_n(x) \vert$. A complex-valued function $f$ on $G$ is said to satisfy the mean-value property for radius $n$ if
\begin{equation}
\frac{1}{\vert S_n(x) \vert} \sum_{w \in S_n(x)} f(w) = f(x)  \label{eq:genmeanvalue}
\end{equation}
for all $x \in G$. We now state a problem that is a discrete group version of (\ref{eq:contmeanvalue}).
\begin{Prob} \label{discretetworadharm}
Suppose $n,m \in \mathbb{N}$. If a complex-valued function $f$ on $G$ satisfies
\[ \frac{1}{\vert S_n(x) \vert} \sum_{w \in S_n(x)} f(w) = f(x) =\frac{1}{\vert S_m(x)\vert} \sum_{w \in S_m(x)} f(w) \]
for all $x \in G$, then is $f$ harmonic?
\end{Prob}

In Section \ref{preliminaries} we give needed definitions and prove some preliminary results. In particular we make a connection between the problem of zero divisors on torsion free discrete groups and Problem \ref{rightpompeiu}. We will give some examples of groups for which (\ref{eq:Pompeiu}) is true when we assume an extra condition on the decay of $f$. We state our main results in Section \ref{statementmainresults}. The torsion free rank of an abelian group $G$, which we shall denote by $r_0(G)$, is the cardinality of a maximal independent system of elements in $G$ of infinite order. Indicate the cardinal of the continuum by $2^{\omega}$. In Section \ref{abelianpompeiu} we prove Theorem \ref{abelianchar}, which gives necessary and sufficient conditions on a finite collection of finite subsets of $G$ to have the Pompeiu property when $G$ is abelian and $r_0(G) < 2^{\omega}$. Let $F_k$ be the free group on $k$ generators, where $k \geq 2$. In Section \ref{freegroupspompeiu} we prove Theorem \ref{pompeiufreechar}, which gives necessary and sufficient conditions on a finite set of finite radial subsets, to be defined in Section \ref{statementmainresults}, of $F_k$ to have the Pompeiu property. Furthermore, in Section \ref{freegroupspompeiu} we will explain how our results for $F_k$ overlap with some of the results in \cite{CohenPicardello88}, where two circle problems are studied on homogenous trees. In Section \ref{meanvalue} we study Problem \ref{discretetworadharm} for free groups. In particular, we give a sufficient condition which guarantees that the harmoncity of a function $f$ on $F_k$ is equivalent to it having the mean-value property over two spheres.

I would like to thank Peter Linnell for making many useful comments on an earlier version of the paper.

\section{Preliminaries and the problem of zero divisors}\label{preliminaries}
For $z \in \mathbb{C}, \bar{z}$ will denote the complex conjugate of $z$. We represent a complex-valued function $f$ on $G$ by a formal sum $f = \sum_{g\in G} a_g g$ where $a_g \in \mathbb{C}$ and $f(g) = a_g$. Also, we shall write $\tilde{f}$ for $\sum_{g\in G} a_g g^{-1}$ and $\bar{f}$ for $\sum_{g \in G} \overline{a_g} g$. Indicate by $\mathcal{F}(G)$ the set of all functions on $G$. For $1 \leq p \in \mathbb{R}$, $\ell^p(G)$ will consist of those formal sums for which $\sum_{g \in G} \vert a_g \vert^p < \infty$, and $\mathbb{C}G$, the group ring of $G$ over $\mathbb{C}$, will consist of all formal sums that satisfy $a_g = 0$ for all but finitely many $g$. The group ring $\mathbb{C}G$ can also be thought of as the set of all functions on $G$ with compact support. For $g \in G$ and $f \in \mathcal{F}(G)$, the left translate $L_g(f)$ of $f$ is defined to be $(L_g(f))(x) = f(gx)$. 

Let $\alpha = \sum_{g \in G} a_gg \in \mathbb{C}G$ and $f = \sum_{g\in G} b_g g \in \mathcal{F}(G)$. Define a map $\langle \cdot, \cdot \rangle \colon \mathbb{C}G \times \mathcal{F} \to \mathbb{C}$ by
\[ \langle \alpha, f \rangle = \sum_{g \in G} a_g\overline{b_g}. \]
For a fixed $h \in \mathcal{F}(G), \langle \cdot, h \rangle$ is a linear functional on $\mathbb{C}G$. Now let $T$ be a linear functional on $\mathbb{C}G$. Define $f(g) = \overline{T(g)}$ for each $g \in G$. Thus each linear functional on $\mathbb{C}G$ defines an element of $\mathcal{F}(G)$. Hence, the dual of $\mathbb{C}G$ can be identified with $\mathcal{F}(G)$. If $E$ is a subset of $G$, denote by $\chi_{E}$ the characteristic function on $E$. Let $g \in G$. Observe that $L_g(\chi_E) = \chi_{g^{-1}E}$. Suppose $\sum_{x \in gK} f(x) = 0$ for all $g \in G$ and for some finite subset $K$ of $G$. Then
\[ \sum_{x \in gK} f(x) = \sum_{x \in G} \chi_{gK}(x)f(x) = \sum_{x \in G} (L_{g^{-1}}(\chi_K))(x) f(x) = \langle (L_{g^{-1}}(\chi_K)), \bar{f}\rangle. \]

We have just proved
\begin{Lem} \label{translates}
Let $K$ be a finite subset of $G$, and let $f$ be a function on $G$. Then $\sum_{x \in gK}f(x) = 0$ for all $g \in G$ if and only if $\langle L_g(\chi_K), \bar{f} \rangle = 0$ for all $g \in G$.
\end{Lem}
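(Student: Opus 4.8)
The plan is to unwind both sides into the same bilinear pairing and then exploit the fact that inversion is a bijection of $G$. First I would rewrite the finite sum $\sum_{x \in gK} f(x)$ as a sum over all of $G$ weighted by the characteristic function of the translated set, namely $\sum_{x \in G} \chi_{gK}(x) f(x)$; since $K$ is finite, $gK$ is finite and this is a finite sum, so no convergence issue arises.

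Next I would identify the characteristic function $\chi_{gK}$ as a left translate of $\chi_K$. Using the observation already recorded in the text that $L_g(\chi_E) = \chi_{g^{-1}E}$ for any subset $E$ of $G$, applied with $g$ replaced by $g^{-1}$ and $E = K$, gives $L_{g^{-1}}(\chi_K) = \chi_{gK}$. Substituting this in, the sum becomes $\sum_{x \in G} \big(L_{g^{-1}}(\chi_K)\big)(x)\, f(x)$, which by the definition of the pairing $\langle \cdot , \cdot \rangle$ on $\mathbb{C}G \times \mathcal{F}(G)$ (and the definition of $\bar f$, whose coefficients are the complex conjugates of those of $f$, so that the conjugation in the pairing cancels) is precisely $\langle L_{g^{-1}}(\chi_K), \bar f \rangle$. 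Note $L_{g^{-1}}(\chi_K) \in \mathbb{C}G$ because $K$ is finite, so the pairing is defined.

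Finally I would pass between the two quantifications. We have shown $\sum_{x \in gK} f(x) = \langle L_{g^{-1}}(\chi_K), \bar f \rangle$ for every $g \in G$. Hence the left-hand sum vanishes for all $g \in G$ if and only if $\langle L_{g^{-1}}(\chi_K), \bar f \rangle = 0$ for all $g \in G$; since $g \mapsto g^{-1}$ is a bijection of $G$, the latter holds if and only if $\langle L_{g}(\chi_K), \bar f \rangle = 0$ for all $g \in G$, which is the claimed equivalence.

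There is essentially no obstacle here: the only points requiring a moment's care are the finiteness of $K$ (needed both so that all sums are finite and so that $L_{g^{-1}}(\chi_K)$ genuinely lies in $\mathbb{C}G$, where the pairing is defined) and the bookkeeping with the conjugation built into $\langle \cdot , \cdot \rangle$ versus the conjugation in $\bar f$, which cancel. The argument is a direct computation.
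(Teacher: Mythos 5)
Your argument is correct and is essentially identical to the paper's: the paper proves the lemma by the same displayed computation $\sum_{x \in gK} f(x) = \sum_{x \in G} \chi_{gK}(x)f(x) = \sum_{x \in G} (L_{g^{-1}}(\chi_K))(x) f(x) = \langle L_{g^{-1}}(\chi_K), \bar{f}\rangle$, using the observation $L_g(\chi_E) = \chi_{g^{-1}E}$. Your explicit remark that $g \mapsto g^{-1}$ is a bijection (to reconcile the quantifier over $L_{g^{-1}}$ with the one over $L_g$) is a detail the paper leaves implicit, but it is the same proof.
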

Lemma \ref{translates} will eventually allow us to give a link between Pompeiu's problem and the problem of zero divisors.

Let $X$ represent either $\mathcal{F}(G), \ell^p(G)$ or $\mathbb{C}G$. For $\alpha = \sum_{g \in G} a_g g \in \mathbb{C}G$ and $f = \sum_{g \in G} b_g g \in 
X$, we define a multiplication, known as convolution, $\mathbb{C}G \times X \to X$ by 
\[ \alpha \ast f = \sum_{g, h \in G} a_g b_h gh = \sum_{g \in G} \left( \sum_{h\in G} a_{gh^{-1}} b_h \right) g. \]
\begin{Lem} \label{zerodivtrans} 
Let $\alpha \in \mathbb{C}G$ and let $f \in \mathcal{F}(G)$. Then 
   $\alpha \ast \tilde{f} = 0$ if and only if $\langle L_g(\alpha), \bar{f} \rangle = 0$ for all $g \in G$.
  \end{Lem}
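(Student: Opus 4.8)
The plan is to prove the equivalence by a direct, coefficient-by-coefficient computation: I will show that the coefficient of each $g \in G$ in the formal sum $\alpha \ast \tilde{f}$ is \emph{exactly} the scalar $\langle L_g(\alpha), \bar{f}\rangle$, after which the lemma is immediate, since a formal sum is $0$ precisely when all of its coefficients vanish.

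First I would fix notation, writing $\alpha = \sum_{g \in G} a_g g$ and $f = \sum_{h \in G} b_h h$, so that $\tilde{f} = \sum_{h \in G} b_h h^{-1} = \sum_{h \in G} b_{h^{-1}} h$. Applying the convolution formula stated above to $\alpha$ and $\tilde{f}$, the coefficient of $g$ in $\alpha \ast \tilde{f}$ is $\sum_{h \in G} a_{gh^{-1}} b_{h^{-1}}$, and re-indexing $h \mapsto h^{-1}$ turns this into $\sum_{h \in G} a_{gh} b_h$. This sum is finite because $\alpha$ has finite support, so there are no convergence issues. Hence $\alpha \ast \tilde{f} = 0$ if and only if $\sum_{h \in G} a_{gh} b_h = 0$ for every $g \in G$.

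Next I would compute the other side. Since $\alpha \in \mathbb{C}G$, its left translate $L_g(\alpha)$, defined by $(L_g(\alpha))(x) = \alpha(gx) = a_{gx}$, again lies in $\mathbb{C}G$, so the pairing $\langle L_g(\alpha), \bar{f}\rangle$ is defined. Writing $L_g(\alpha) = \sum_{x \in G} a_{gx}\, x$ and $\bar{f} = \sum_{x \in G} \overline{b_x}\, x$, the definition of $\langle \cdot, \cdot \rangle$ gives $\langle L_g(\alpha), \bar{f}\rangle = \sum_{x \in G} a_{gx}\, \overline{\overline{b_x}} = \sum_{x \in G} a_{gx} b_x$, which is precisely the coefficient of $g$ in $\alpha \ast \tilde{f}$ found above. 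Therefore $\alpha \ast \tilde{f} = 0$ if and only if $\langle L_g(\alpha), \bar{f}\rangle = 0$ for all $g \in G$.

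There is no genuine obstacle here beyond careful bookkeeping; the only two points that require attention are the reindexing forced by the inversion built into $\tilde{f}$ and the cancellation of the two complex conjugations (the one in the definition of $\bar{f}$ against the one in the definition of the pairing) — this cancellation is exactly the reason $\tilde{f}$, rather than $f$ or $\bar{f}$, is the object that appears on the convolution side.
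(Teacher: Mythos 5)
Your proof is correct and follows essentially the same route as the paper: both arguments reduce the lemma to the observation that the coefficient of $g$ in $\alpha \ast \tilde{f}$ equals $\sum_{x \in G} a_{gx} b_x = \langle L_g(\alpha), \bar{f}\rangle$. You simply spell out the reindexing $h \mapsto h^{-1}$ and the cancellation of conjugations that the paper leaves implicit.
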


\begin{proof}
Set $\alpha = \sum_{g \in G}a_g g$ and $f = \sum_{g \in G} b_g g$. Note that for $g \in G, L_g(\alpha) = \sum_{x \in G} a_{gx} x$. Then
\[ \alpha \ast \tilde{f} = \sum_{g \in G} \left( \sum_{x \in G} a_{gx}b_x \right) g \]
and $\langle L_g(\alpha), \bar{f} \rangle = \sum_{x \in G} a_{gx} b_x$. The result now follows. 
\end{proof}

By combining Lemma \ref{translates} and Lemma \ref{zerodivtrans} we obtain
\begin{Prop} \label{pompeiuzerodiv}
Let $\mathcal{K}$ be a finite collection of finite subsets of $G$. Then $f = 0$ is the only function on $G$ that satisfies (\ref{eq:Pompeiu}) if and only if $f = 0$ is the only solution to the system of convolution equations
\[ \chi_K \ast \tilde{f} = 0  \mbox{ for all } K \in \mathcal{K}. \]
\end{Prop}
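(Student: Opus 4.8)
The plan is to simply chain the two lemmas just proved, applied one set $K$ at a time. Fix an arbitrary function $f \in \mathcal{F}(G)$ and a single finite subset $K \subseteq G$. Since $K$ is finite, $\chi_K \in \mathbb{C}G$, so Lemma \ref{zerodivtrans} applies with $\alpha = \chi_K$ and gives that $\chi_K \ast \tilde{f} = 0$ holds if and only if $\langle L_g(\chi_K), \bar{f} \rangle = 0$ for all $g \in G$. On the other hand, Lemma \ref{translates} says that $\langle L_g(\chi_K), \bar{f} \rangle = 0$ for all $g \in G$ is equivalent to $\sum_{x \in gK} f(x) = 0$ for all $g \in G$, i.e.\ to $f$ satisfying (\ref{eq:Pompeiu}) for that particular $K$. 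Composing these two equivalences, for each fixed $K$ we get that $f$ satisfies (\ref{eq:Pompeiu}) for $K$ if and only if $\chi_K \ast \tilde{f} = 0$.

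Next I would quantify over $\mathcal{K}$. Since the displayed equivalence holds for every individual $K \in \mathcal{K}$, a function $f$ satisfies (\ref{eq:Pompeiu}) for all $K \in \mathcal{K}$ precisely when $\chi_K \ast \tilde{f} = 0$ for all $K \in \mathcal{K}$. Hence the solution set of the Pompeiu system and the solution set of the convolution system are literally the same subset of $\mathcal{F}(G)$. In particular one of them equals $\{0\}$ if and only if the other does, which is exactly the claimed statement. Because $f = 0$ trivially lies in both solution sets, the content of the proposition is the uniqueness direction, and it follows immediately from the equality of the two solution sets.

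There is no real obstacle here; the argument is a routine bookkeeping combination of Lemmas \ref{translates} and \ref{zerodivtrans}. The only point that needs a word of care is the remark that $\chi_K$ belongs to $\mathbb{C}G$ (so that Lemma \ref{zerodivtrans} is applicable), which is guaranteed by the hypothesis that each $K \in \mathcal{K}$ is finite. I would state that explicitly and then present the two-step equivalence as above.
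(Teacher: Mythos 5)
Your argument is correct and is exactly the paper's proof: the paper obtains Proposition \ref{pompeiuzerodiv} precisely by combining Lemma \ref{translates} and Lemma \ref{zerodivtrans} with $\alpha = \chi_K$ and then quantifying over $K \in \mathcal{K}$. Your write-up just makes the bookkeeping explicit; nothing is missing.
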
 

We just saw that there is a connection between Pompeiu's problem on discrete groups and the following problem concerning zero divisors.

\begin{Prob}
Let $G$ be a torsion free group and let $X$ be one of the following: $\mathbb{C}G, \ell^p(G)$ or $\mathcal{F}(G)$. If $0 \neq \alpha \in \mathbb{C}G$ and if $0 \neq f \in X$, then is $\alpha \ast f = 0?$
\end{Prob}

We point out that if $G$ is not torsion free, then there are zero divisors in $\mathbb{C}G$. For example, let 1 be the identity element of $G$ and let $g \in G$ such that $g \neq 1$ and $g^n = 1$ for some $n \in \mathbb{N}$. Then $(1 + g + \cdots + g^{n-1}) \ast (1 - g) = 0$. It now follows that the set $K = \{ 1, g, \dots, g^{n-1}\}$ does not have the Pompeiu property. To see this, set $ f = 1-g \in \mathbb{C}G$ and note $\chi_K = 1 + g + \cdots + g^{n-1}$. Using the fact $g^{-k} = g^{n-k}$ for $0 \leq k \leq n$, we have $ \chi_K \ast \tilde{f} =  0$. By Proposition \ref{pompeiuzerodiv}, $f = 1-g$ satisfies (\ref{eq:Pompeiu}) when $K = \{ 1, g, \dots, g^{n-1} \}$ and $g^n =1$.

The zero divisor problem has been studied in \cite{Cohen81, Linnell91, Linnell92, LinnellPuls01, Puls98}. Using results obtained in some of these papers we can give some results concerning (\ref{eq:Pompeiu}) for specific groups $G$ and classes of functions on $G$. For example, in the case $G = \mathbb{Z}^n$, where $ 2 \leq n \in \mathbb{N}$, if $f$ satisfies (\ref{eq:Pompeiu}) and $f \in \ell^p(\mathbb{Z}^n)$ for $p \leq \frac{2n}{n-1}$, then by \cite[Theorem 2.1]{LinnellPuls01} $f = 0$. Let $F_k$ denote the free group on $k$ generators, where $k \geq 2$. The main result of \cite{Linnell92} shows that $f = 0$ is the only element of $\ell^2(F_k)$ for which (\ref{eq:Pompeiu}) is true. In Section \ref{freegroupspompeiu} we will see that for $p > 2$, there exists a nonzero $f$ in $\ell^p(F_k)$ that satisfies (\ref{eq:Pompeiu}) for certain subsets of $F_k$ that are circular. 

\section{Statement of main results}\label{statementmainresults}
We start by assuming $G$ is a discrete abelian group with $r_0(G) < 2^{\omega}$. Let $M$ be a maximal ideal in $\mathbb{C}G$. Then $\mathbb{C}G/M$ is a field since $\mathbb{C}G$ is a commutative ring with identity 1. Let $i_M \colon \mathbb{C}G \mapsto \mathbb{C}G/M$ be the natural map. It was shown in the proof of Theorem 3 in \cite{laczkovichgszekelyhidi04} that $\mathbb{C}G/M$ is isomorphic to $\mathbb{C}$. Thus $i_M$ can be considered a ring homomorphism from $\mathbb{C}G$ onto $\mathbb{C}$ with kernel $M$. We define a function $f_M \colon G \rightarrow \mathbb{C}$ by
\[ f_M(g) = i_M(g) \]
for a given maximal ideal $M$ of $\mathbb{C}G$. Each $g \in G$ is invertible in $\mathbb{C}G$. Thus $f_M(g) \neq 0$ since $i_M(g) \neq 0$ in $\mathbb{C}G/M$. For $\alpha \in \mathbb{C}G$, define 
\[ Z(\alpha) \colon = \{ i_M \mid i_M (\alpha) = 0 \}, \]
where $M$ is a maximal ideal in $\mathbb{C}G$. For an ideal $I$ in $\mathbb{C}G$, set $Z(I) = \bigcap_{\alpha \in I} Z(\alpha).$ 

The main result of Section \ref{abelianpompeiu} is
\begin{Thm} \label{abelianchar} 
Let $G$ be a discrete abelian group with $r_0(G) < 2^{\omega}$ and let $\mathcal{K}$ be a finite set of finite subsets of $G$. Then $\mathcal{K}$ has the Pompeiu property if and only if $\bigcap_{K \in \mathcal{K}} Z(\chi_K) = \emptyset$.
\end{Thm}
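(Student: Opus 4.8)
The plan is to reduce everything to Proposition~\ref{pompeiuzerodiv} and then exploit the fact that, under the hypothesis $r_0(G) < 2^{\omega}$, the quotient of $\mathbb{C}G$ by any maximal ideal is just $\mathbb{C}$. By Proposition~\ref{pompeiuzerodiv}, $\mathcal{K}$ has the Pompeiu property if and only if $f = 0$ is the only $f \in \mathcal{F}(G)$ with $\chi_K \ast \tilde{f} = 0$ for every $K \in \mathcal{K}$. I would prove the two implications of the theorem in contrapositive form.

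Suppose first that $\bigcap_{K \in \mathcal{K}} Z(\chi_K) \neq \emptyset$ and pick $i_M$ in this intersection. Consider the function $f_M(g) = i_M(g)$ introduced in Section~\ref{statementmainresults}. Since $i_M(g) \neq 0$ for every $g \in G$, $f_M$ is not the zero function, and because $i_M$ is a ring homomorphism we have $f_M(gh) = f_M(g) f_M(h)$ and $f_M(g^{-1}) = f_M(g)^{-1}$. A short computation of the coefficients of $\chi_K \ast \tilde{f_M}$, using these multiplicative identities, exhibits the coefficient of $g$ as $f_M(g)^{-1} \sum_{k \in K} f_M(k)$; and $\sum_{k \in K} f_M(k) = i_M(\chi_K) = 0$ because $i_M \in Z(\chi_K)$. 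Hence $\chi_K \ast \tilde{f_M} = 0$ for every $K \in \mathcal{K}$, so $f_M$ is a nonzero solution and $\mathcal{K}$ does not have the Pompeiu property.

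Conversely, suppose $\bigcap_{K \in \mathcal{K}} Z(\chi_K) = \emptyset$, and let $I$ be the ideal of $\mathbb{C}G$ generated by $\{ \chi_K : K \in \mathcal{K} \}$. From the inclusions $Z(\alpha\beta) \supseteq Z(\beta)$ and $Z(\alpha + \beta) \supseteq Z(\alpha) \cap Z(\beta)$ one gets $Z(I) = \bigcap_{K \in \mathcal{K}} Z(\chi_K) = \emptyset$. Now I would use the quoted fact (from the proof of Theorem~3 of \cite{laczkovichgszekelyhidi04}, which is where the restriction $r_0(G) < 2^{\omega}$ is used) that $\mathbb{C}G/M \cong \mathbb{C}$ for every maximal ideal $M$: this makes $M \mapsto i_M$ a bijection between the maximal ideals of $\mathbb{C}G$ and the homomorphisms $\mathbb{C}G \to \mathbb{C}$ fixing $\mathbb{C}$, so $Z(I) = \emptyset$ says precisely that no maximal ideal of $\mathbb{C}G$ contains $I$. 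Since every proper ideal of a commutative ring with identity is contained in a maximal ideal, $I = \mathbb{C}G$; in particular there exist $\beta_K \in \mathbb{C}G$ with $\sum_{K \in \mathcal{K}} \beta_K \ast \chi_K = 1$. If now $f \in \mathcal{F}(G)$ satisfies $\chi_K \ast \tilde{f} = 0$ for all $K \in \mathcal{K}$, then convolving by $\beta_K$, summing over $K \in \mathcal{K}$, and using associativity and commutativity of convolution (valid since each $\beta_K$ and each $\chi_K$ lies in $\mathbb{C}G$) gives $\tilde{f} = 1 \ast \tilde{f} = 0$, so $f = 0$. Thus $\mathcal{K}$ has the Pompeiu property.

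I expect the only genuine obstacle to be the passage from the hypothesis $\bigcap_{K} Z(\chi_K) = \emptyset$ to $1 \in I$, which is exactly the point where $\mathbb{C}G/M \cong \mathbb{C}$, hence the restriction $r_0(G) < 2^{\omega}$, is essential; I would invoke the result of \cite{laczkovichgszekelyhidi04} for this rather than reprove it. The remaining steps are formal, the only thing needing a line of care being the legitimacy of associating and commuting the convolutions above, which holds because all the convolving factors have finite support.
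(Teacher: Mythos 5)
Your argument is correct and is essentially the paper's proof with Proposition~\ref{killedbyfm} and Theorem~\ref{wienerabelian} inlined: the first direction reproves the ``only if'' half of Proposition~\ref{killedbyfm} by direct computation of the coefficients of $\chi_K \ast \widetilde{f_M}$, and the second direction makes explicit the Wiener-type step ($Z(I)=\emptyset \Rightarrow I=\mathbb{C}G$) together with the convolution identity $\sum_K \beta_K \ast (\chi_K \ast \tilde{f}) = 1 \ast \tilde{f}$ that the paper leaves implicit. One small correction to your closing remark: the isomorphism $\mathbb{C}G/M \cong \mathbb{C}$ (hence the hypothesis $r_0(G) < 2^{\omega}$) is not what drives the passage from $Z(I)=\emptyset$ to $1 \in I$ --- that only needs $i_M(\alpha)=0 \Leftrightarrow \alpha \in M$ plus Zorn's lemma --- rather it is essential in your \emph{first} direction, where it guarantees that $f_M$ is a genuine nonzero \emph{complex-valued} function on $G$.
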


We now shift our focus to the free group $F_k$ on $k$ generators, where $2 \leq k \in \mathbb{N}$. Indicate by $e$ the identity element of $F_k$. Any element $x$ of $F_k$ has an unique expression as a finite product of generators and their inverses, which does not contain any two adjacent factors $ww^{-1}$ or $w^{-1}w$. The number of factors in $x$ is called the {\em length} of $x$ and is denoted by $\vert x \vert$. The identity element $e$ will have length zero.

A function in $\mathcal{F}(F_k)$ is said to be {\em radial} if its value depends only on $\vert x \vert$. Let $E_n = \{ x \in F_k\mid \vert x \vert = n \}$, and let $e_n$ indicate the cardinality of $E_n$. Then $e_n = 2k(2k-1)^{n-1}$ for $n \geq 1$, and $e_0 = 1$. We shall say that a subset $A$ of $F_k$ is a {\em radial set} if $A$ is the union of sets of the form $E_n$. Let $\chi_n$ denote the characteristic function on $E_n$, so as an element of $\mathbb{C}F_k$ we have $\chi_n = \sum_{\vert x \vert = n} x$. Then every radial function has the form $\sum_{n=0}^{\infty} a_n \chi_n$, where $a_n \in \mathbb{C}$. Let $(\mathbb{C}F_k)_r$ denote the radial functions contained in $\mathbb{C}F_k$ and let $\omega = \sqrt{2k-1}$. It was shown in \cite[Chapter 3]{Figa_Picardello83} that
\begin{align*}
\chi_1 \ast \chi_1 & = \chi_2 + 2k \ast \chi_0 \\
\chi_1 \ast \chi_n & = \chi_{n+1} + \omega^2 \chi_{n-1}, \quad n \geq 2,
\end{align*}
hence $(\mathbb{C}F_k)_r$ is a commutative ring which is generated by $\chi_0$ and $\chi_1$. 

For $0 \leq n \in \mathbb{Z}$ define polynomials $p_n(z)$ by 
\begin{eqnarray}\label{eq:recpolynomials}
&p_0(z) =1, \quad p_1(z) = z, \quad p_2(z) = z^2 -2k  \nonumber \\
\text{and }                                                                    
&p_{n+1}(z) = zp_n(z) - \omega^2p_{n-1}(z) \text{ for }n \geq 2. \label{eq:recpolynomials}
\end{eqnarray}

For $\alpha = \sum_{j=0}^m a_j \chi_j \in (\mathbb{C}F_k)_r$, define a function $\hat{\alpha} \colon \mathbb{C} \rightarrow \mathbb{C}$ by
\[ \hat{\alpha} (z) =\sum_{j =0}^m a_j p_j(z). \]
Let $Z(\alpha) = \{ z \mid \hat{\alpha}(z) =0\}$. If $I$ is an ideal in $(\mathbb{C}F_k)_r$, then set $Z(I) = \bigcap_{\alpha \in I} Z(\alpha).$

In Section \ref{freegroupspompeiu} we will prove the following result, which is analogous to Theorem \ref{abelianchar}.
\begin{Thm}\label{pompeiufreechar} 
Let $2 \leq k \in \mathbb{N}$ and let $\mathcal{K}$ be a finite set of nonempty finite radial subsets of $F_k$. The $\mathcal{K}$ has the Pompeiu property if and only if $\bigcap_{K \in \mathcal{K}} Z(\chi_K) = \emptyset$.
\end{Thm}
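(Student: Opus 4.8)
The plan is to run everything through Proposition \ref{pompeiuzerodiv}, reducing the Pompeiu property of $\mathcal K$ (on the class of all functions) to the statement that $f=0$ is the only $f\in\mathcal F(F_k)$ with $\chi_K\ast\tilde f=0$ for every $K\in\mathcal K$, and then to exploit that the radial subring is a polynomial ring. First I would record the dictionary. Since each $K$ is a finite radial set, $\chi_K=\sum_{n\in S_K}\chi_n$ for a finite $S_K\subseteq\mathbb Z_{\ge 0}$, so $\chi_K\in(\mathbb C F_k)_r$; the recursions displayed before (\ref{eq:recpolynomials}) give $\chi_n=p_n(\chi_1)$ for all $n$ by induction, hence $\chi_K=\hat\chi_K(\chi_1)$. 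Because the $\chi_n$ have pairwise disjoint supports they are linearly independent over $\mathbb C$, and since $(\mathbb C F_k)_r$ is generated by $\chi_0$ and $\chi_1$ this makes the $\mathbb C$-algebra map $\mathbb C[z]\to(\mathbb C F_k)_r$, $z\mapsto\chi_1$, an isomorphism sending $p_n(z)\mapsto\chi_n$ and $\hat\chi_K(z)\mapsto\chi_K$. Under this isomorphism $Z(\chi_K)=\{z:\hat\chi_K(z)=0\}$ is the vanishing set of $\chi_K$, so $\bigcap_{K\in\mathcal K}Z(\chi_K)=\emptyset$ is equivalent to the polynomials $\hat\chi_K$ having no common zero, equivalently to their generating the unit ideal of $\mathbb C[z]$, equivalently to $\chi_0$ lying in the ideal of $(\mathbb C F_k)_r$ generated by $\{\chi_K:K\in\mathcal K\}$.

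For the ``if'' direction, assume $\bigcap_{K}Z(\chi_K)=\emptyset$. By the above and B\'ezout in $\mathbb C[z]$ there are $\beta_K\in(\mathbb C F_k)_r$ with $\sum_{K\in\mathcal K}\beta_K\ast\chi_K=\chi_0$. If $f\in\mathcal F(F_k)$ satisfies $\chi_K\ast\tilde f=0$ for all $K$, then applying $\beta_K\ast(-)$ and summing, and using that convolution makes $\mathcal F(F_k)$ an associative left $\mathbb C F_k$-module, we get $\tilde f=\chi_0\ast\tilde f=\sum_K\beta_K\ast(\chi_K\ast\tilde f)=0$, so $f=0$. By Proposition \ref{pompeiuzerodiv}, $\mathcal K$ has the Pompeiu property.

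For the ``only if'' direction I would exhibit an explicit nonzero solution of (\ref{eq:Pompeiu}) when $\bigcap_{K}Z(\chi_K)\neq\emptyset$, using spherical functions as in \cite[Chapter 3]{Figa_Picardello83}. For $z\in\mathbb C$ let $\phi_z\in\mathcal F(F_k)$ be the radial function with $\phi_z(e)=1$, with value $z/(2k)$ on $E_1$, and, writing $\phi_z(n)$ for the common value of $\phi_z$ on $E_n$, satisfying $(2k-1)\phi_z(n+1)=z\phi_z(n)-\phi_z(n-1)$ for $n\ge 1$; this determines $\phi_z$ (since $2k-1\ge 3$) and $\phi_z\neq 0$ since $\phi_z(e)=1$. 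A direct computation, using that among the $2k$ elements $g^{-1}y$ with $\vert g\vert=1$ exactly one has length $\vert y\vert-1$ when $\vert y\vert\ge 1$, shows $\chi_1\ast\phi_z=z\phi_z$; by module associativity $q(\chi_1)\ast\phi_z=q(z)\phi_z$ for every $q\in\mathbb C[z]$, hence $\chi_K\ast\phi_z=\hat\chi_K(z)\phi_z$ for all $K$. Picking $z_0\in\bigcap_{K}Z(\chi_K)$ and noting $\widetilde{\phi_{z_0}}=\phi_{z_0}$ because $\phi_{z_0}$ is radial, we obtain $\chi_K\ast\widetilde{\phi_{z_0}}=0$ for all $K$ with $\phi_{z_0}\neq 0$; by Proposition \ref{pompeiuzerodiv}, $\phi_{z_0}$ is a nonzero function satisfying (\ref{eq:Pompeiu}), so $\mathcal K$ fails the Pompeiu property.

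The polynomial-ring identification and the B\'ezout manipulation are routine; the step requiring genuine care is the verification that the second-order recurrence defining $\phi_z$ is exactly the eigenvalue equation $\chi_1\ast\phi_z=z\phi_z$, and the accompanying check that convolution on $\mathcal F(F_k)$ is an associative $\mathbb C F_k$-module action so that both the B\'ezout argument and the identity $q(\chi_1)\ast\phi_z=q(z)\phi_z$ are legitimate. I expect the construction of the radial eigenfunction $\phi_z$ (the tree analogue of the spherical function) to be the crux, with everything else formal bookkeeping.
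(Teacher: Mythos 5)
Your proposal is correct and follows essentially the same route as the paper: both directions rest on identifying $(\mathbb{C}F_k)_r$ with $\mathbb{C}[z]$ via $\chi_n = p_n(\chi_1)$ and on the spherical functions $\phi_z$ as common radial eigenvectors, with everything funneled through Proposition \ref{pompeiuzerodiv}. The only cosmetic difference is that you inline the paper's Wiener-type theorem (Theorem \ref{wienertypefree}) as an explicit B\'ezout identity producing $\chi_0 = \sum_K \beta_K \ast \chi_K$, and inline Corollary \ref{containzeros} as the direct verification of $\chi_1 \ast \phi_z = z\phi_z$.
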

Theorems \ref{abelianchar} and \ref{pompeiufreechar} are consequences of Theorems \ref{wienerabelian} and \ref{wienertypefree} respectively, both of which are Wiener type results.

After a moment's reflection, we see that for free groups $F_k$ the definition of $S_n(x)$ given by (\ref{eq:defsphere}) is equivalent to 
\[ S_n(x) = \{ w = xy \mid \mbox{ for some } y \in E_n \}, \]
where $x \in F_k$.

The Laplacian on $F_k$ is defined by 
\[ \triangle f(x) = \frac{1}{2k} \sum_{y \in E_1} f(xy), \]
where $x \in F_k.$ A function $f$ on $F_k$ is said to be {\em harmonic} if $\triangle f(x) =0$ for all $x \in F_k$. If $f$ is harmonic on $F_k$, then $f$ satisfies the mean-value property for all natural numbers $n$; that is, $e_n f(x) = \sum_{y \in E_n} f(xy)$ for all $n \in \mathbb{N}$ and for all $x \in F_k$. The converse to this statement is the free group version of Problem \ref{discretetworadharm}. In Section \ref{meanvalue} we give a sufficient condition that guarantees a positive answer to Problem \ref{discretetworadharm}. More specifically we prove
\begin{Thm}\label{sufficienttwocirc}
Let $n,m \in \mathbb{N}$ and suppose the only common solution to 
\[ \frac{p_n(z)}{e_n} = 1 \mbox{ and } \frac{p_m(z)}{e_m} = 1 \]
is $z = 2k.$
Then $f \in \mathcal{F}(F_k)$ is harmonic if and only if $\sum_{ y \in E_n} f(xy) = e_n f(x)$ and $\sum_{y \in E_m} f(xy) = e_m f(x)$ for all $x \in F_k$.
\end{Thm}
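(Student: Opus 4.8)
The plan is to transport both the harmonicity of $f$ and the two mean-value hypotheses into convolution equations over $\mathbb{C}F_k$, and then to work inside the radial subring $(\mathbb{C}F_k)_r$, which is a polynomial ring in one variable; the hypothesis then becomes a statement about the greatest common divisor of $p_n(z)-e_n$ and $p_m(z)-e_m$ in $\mathbb{C}[z]$.

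First I would record the dictionary. By Lemma \ref{zerodivtrans}, for any $\alpha=\sum_g a_g g\in\mathbb{C}F_k$ the identity $\sum_{x}a_{gx}f(x)=0$ for all $g$ is equivalent to $\alpha\ast\tilde f=0$. Unwinding this with $\alpha=\chi_n-e_n\chi_0$ shows that $f$ satisfies $\sum_{y\in E_n}f(xy)=e_nf(x)$ for all $x$ if and only if $(\chi_n-e_n\chi_0)\ast\tilde f=0$; taking $\alpha=\chi_1-e_1\chi_0$ (with $e_1=2k$) shows that $f$ is harmonic if and only if $(\chi_1-e_1\chi_0)\ast\tilde f=0$. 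Next, comparing the relations $\chi_1\ast\chi_1=\chi_2+2k\chi_0$ and $\chi_1\ast\chi_n=\chi_{n+1}+\omega^2\chi_{n-1}$ with the recursion (\ref{eq:recpolynomials}) gives $\chi_j=p_j(\chi_1)$ for all $j$; since the $\chi_j$ have pairwise disjoint supports they are linearly independent, so $\chi_1$ is transcendental over $\mathbb{C}\chi_0$ and $\alpha\mapsto\hat\alpha$ is a ring isomorphism $(\mathbb{C}F_k)_r\to\mathbb{C}[z]$ carrying $\chi_j$ to $p_j(z)$ and hence $\chi_j-e_j\chi_0$ to $p_j(z)-e_j$. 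A routine induction from (\ref{eq:recpolynomials}) gives $p_j(2k)=e_j$, so $z-2k$ divides $p_j(z)-e_j$ for every $j$.

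The forward implication is then immediate (this is the remark following the statement): writing $p_n(z)-e_n=(z-2k)q(z)$ and transporting back, $(\chi_n-e_n\chi_0)\ast\tilde f=q(\chi_1)\ast(\chi_1-e_1\chi_0)\ast\tilde f=0$ once $(\chi_1-e_1\chi_0)\ast\tilde f=0$, and similarly for $m$. For the converse, suppose $(\chi_n-e_n\chi_0)\ast\tilde f=0$ and $(\chi_m-e_m\chi_0)\ast\tilde f=0$. In the principal ideal domain $\mathbb{C}[z]$ the ideal generated by $p_n(z)-e_n$ and $p_m(z)-e_m$ is generated by their greatest common divisor; by hypothesis their only common root is $z=2k$, so that gcd is $(z-2k)^{j}$ for some $j\ge1$. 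The crucial point is that $j=1$, i.e.\ that $2k$ is a simple root of each $p_j(z)-e_j$. Differentiating (\ref{eq:recpolynomials}) at $z=2k$ and setting $d_j:=p_j'(2k)$ gives $d_1=1$, $d_2=4k$, and $d_{j+1}=e_j+2k\,d_j-\omega^2 d_{j-1}$ for $j\ge2$, whence $d_{j+1}-d_j=e_j+\omega^2(d_j-d_{j-1})$; since $e_j>0$ an induction yields $0<d_1<d_2<\cdots$, so $p_j'(2k)\neq0$. Thus the gcd equals $z-2k$, and Bézout provides $u,v\in\mathbb{C}[z]$ with $u(z)\bigl(p_n(z)-e_n\bigr)+v(z)\bigl(p_m(z)-e_m\bigr)=z-2k$. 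Transporting this identity into $(\mathbb{C}F_k)_r$ and convolving on the right with $\tilde f$, using associativity of the action of $\mathbb{C}F_k$ on $\mathcal{F}(F_k)$, gives $(\chi_1-e_1\chi_0)\ast\tilde f=0$; that is, $f$ is harmonic.

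The main obstacle is the simplicity of the root $z=2k$. Without it one obtains only $(\chi_1-e_1\chi_0)^{\ast j}\ast\tilde f=0$, and since $\chi_1-e_1\chi_0$ annihilates the constant functions it is not a non-zero-divisor on $\mathcal{F}(F_k)$, so the power cannot simply be cancelled; the positivity of $p_j'(2k)$ coming from the differentiated recursion is exactly what forces $j=1$. Everything else is bookkeeping with the isomorphism $(\mathbb{C}F_k)_r\cong\mathbb{C}[z]$ and Lemma \ref{zerodivtrans}.
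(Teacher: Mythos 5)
Your proof is correct and follows essentially the same route as the paper's: both reduce the mean-value conditions to convolution equations, transport everything through the isomorphism $(\mathbb{C}F_k)_r\cong\mathbb{C}[z]$, prove that $z=2k$ is a simple root of each $p_j(z)-e_j$ via the same differentiated recursion, and conclude that $\chi_1-2k\chi_0$ lies in the ideal generated by $\chi_n-e_n\chi_0$ and $\chi_m-e_m\chi_0$ (your explicit B\'ezout identity is just the concrete form of the paper's appeal to its Wiener-type Theorem \ref{wienertypefree}). The one genuine difference is that you bypass the paper's preliminary radialization step (the operator $P$ and Proposition \ref{reducetoradial}); since the transported identity can be convolved against an arbitrary $\tilde f\in\mathcal{F}(F_k)$ using only associativity and the commutativity of radial elements among themselves, that reduction is indeed not needed, and your argument is the leaner for it.
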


\section{Proof of Theorem \ref{abelianchar}}\label{abelianpompeiu}
In this section $G$ will always be a discrete abelian group with $r_0(G) < 2^{\omega}$. Recall that if $M$ is a maximal ideal in $\mathbb{C}G$, then $f_M(g) = i_M(g)$, where $i_M \colon G \to \mathbb{C}G/M$ is the natural map. We start with

\begin{Prop} \label{killedbyfm} 
Let $G$ be a discrete abelian group with $r_0(G) < 2^{\omega}$. Let $\alpha = \sum_{g \in G} a_g g \in \mathbb{C}G$ and let $M$ be a maximal ideal in $\mathbb{C}G$. Then $\alpha \in M$ if and only if $\alpha \ast \widetilde{f_M} = 0$.
\end{Prop}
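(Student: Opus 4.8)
The plan is to unwind both conditions to the same concrete statement about the scalars $i_M(g)$. Write $f_M(g) = i_M(g)$ and recall that $f_M$ never vanishes since each $g$ is a unit in $\mathbb{C}G$. First I would compute $\alpha \ast \widetilde{f_M}$ directly from the convolution formula. Since $G$ is abelian, for $\alpha = \sum_{g} a_g g$ and $\widetilde{f_M} = \sum_{h} f_M(h) h^{-1} = \sum_h f_M(h^{-1}) h$, the coefficient of the group element $x$ in $\alpha \ast \widetilde{f_M}$ is $\sum_{g} a_g f_M(g^{-1} x) = \sum_g a_g f_M(g^{-1}) f_M(x)$, where the last equality uses that $i_M$ is a ring homomorphism, hence $f_M(g^{-1}x) = i_M(g^{-1}x) = i_M(g^{-1}) i_M(x) = f_M(g^{-1}) f_M(x)$. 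Pulling out the factor $f_M(x)$, the coefficient of $x$ equals $f_M(x) \cdot \big( \sum_g a_g f_M(g^{-1}) \big) = f_M(x) \cdot \big( \sum_g a_g i_M(g)^{-1}\big)$.

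The key observation is then that the bracketed sum is independent of $x$; call it $c$. Because $f_M(x) \neq 0$ for every $x$, we get $\alpha \ast \widetilde{f_M} = 0$ if and only if $c = 0$, i.e. if and only if $\sum_{g} a_g i_M(g)^{-1} = 0$. So I must show this last equation is equivalent to $\alpha \in M$. For that, apply the ring homomorphism $i_M$ to $\alpha$: $i_M(\alpha) = \sum_g a_g i_M(g)$, and $\alpha \in M$ precisely when $i_M(\alpha) = 0$, i.e. $\sum_g a_g i_M(g) = 0$. To bridge $\sum_g a_g i_M(g) = 0$ and $\sum_g a_g i_M(g)^{-1} = 0$, I would consider $\widetilde{\alpha} = \sum_g a_g g^{-1}$ and note $i_M(\widetilde{\alpha}) = \sum_g a_g i_M(g^{-1}) = \sum_g a_g i_M(g)^{-1} = c$. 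The cleanest route is to observe that $M$ is stabilized by the involution $g \mapsto g^{-1}$: indeed this involution is a ring automorphism of $\mathbb{C}G$ (it is $\mathbb{C}$-linear, and since $G$ is abelian it is multiplicative), so it permutes the maximal ideals; but $\mathbb{C}G/M \cong \mathbb{C}$ so the composition $\mathbb{C}G \to \mathbb{C}G \to \mathbb{C}$ given by $\beta \mapsto i_M(\widetilde{\beta})$ is another homomorphism onto $\mathbb{C}$ whose restriction to $G$ lands in the units; I would identify its kernel and argue it must coincide with $M$ (alternatively, note $g \widetilde{g} = e$ forces $i_M(\widetilde{g}) = i_M(g)^{-1}$, and a short argument shows the map $\beta \mapsto i_M(\widetilde\beta)$ has kernel $\widetilde{M} = M$ since applying the involution twice is the identity and $M$ is maximal). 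Hence $c = i_M(\widetilde\alpha) = 0$ iff $\widetilde\alpha \in M$ iff $\alpha \in \widetilde M = M$.

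Putting the chain together: $\alpha \in M \iff \widetilde\alpha \in M \iff i_M(\widetilde\alpha) = 0 \iff c = 0 \iff \alpha \ast \widetilde{f_M} = 0$, which is the claim.

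I expect the main obstacle to be the bookkeeping in the middle step, namely verifying carefully that $\widetilde M = M$ (equivalently that the homomorphism $\beta \mapsto i_M(\widetilde\beta)$ has kernel $M$). This is where abelianness is genuinely used — for nonabelian $G$ the tilde operation is an anti-automorphism, not an automorphism. Everything else is a direct manipulation of the convolution formula together with the multiplicativity of $i_M$ and the nonvanishing of $f_M$, both already available from the discussion preceding the proposition.
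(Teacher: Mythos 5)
There is a genuine gap, and it originates earlier than the step you flagged. Your computation of the convolution is off by an inversion: writing $\widetilde{f_M}=\sum_h f_M(h^{-1})h$, the coefficient of $x$ in $\alpha \ast \widetilde{f_M}$ is $\sum_{gh=x} a_g f_M(h^{-1}) = \sum_g a_g f_M\bigl((g^{-1}x)^{-1}\bigr) = \sum_g a_g f_M(x^{-1}g) = i_M(x)^{-1}\, i_M(\alpha)$, not $\sum_g a_g f_M(g^{-1}x) = i_M(x)\, i_M(\widetilde{\alpha})$. (What you computed is the coefficient of $x$ in $\alpha \ast f_M$.) With the correct formula the proof ends immediately: $\alpha \ast \widetilde{f_M} = i_M(\alpha)\,\widetilde{f_M}$, and since $\widetilde{f_M}\neq 0$ this vanishes if and only if $i_M(\alpha)=0$, i.e.\ $\alpha\in M$. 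No detour through $\widetilde{\alpha}$ is needed.

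The detour you do take cannot be repaired, because the bridging claim $\widetilde{M}=M$ is false in general. Take $G=\mathbb{Z}$, so $\mathbb{C}G\cong\mathbb{C}[t,t^{-1}]$ and the maximal ideals are the kernels $M_\lambda$ of evaluation at $\lambda\in\mathbb{C}\setminus\{0\}$. The involution $t\mapsto t^{-1}$ sends $M_\lambda$ to $M_{\lambda^{-1}}$, so $\widetilde{M_\lambda}=M_\lambda$ only when $\lambda=\pm 1$. Concretely, for $M=M_2$ and $\alpha=t-2\in M$ one has $i_M(\widetilde{\alpha})=2^{-1}-2\neq 0$, so your intermediate equivalence ``$\alpha\ast\widetilde{f_M}=0$ iff $i_M(\widetilde{\alpha})=0$'' already fails (the left side holds by the proposition, the right side does not). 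The argument ``the involution is an automorphism of order two and $M$ is maximal'' only shows that the involution permutes maximal ideals in orbits of size one or two, not that it fixes each one. For comparison, the paper avoids the explicit convolution altogether: it uses Lemma \ref{zerodivtrans} together with the observation $L_x(\alpha)=x^{-1}\ast\alpha\in M$, so that $\langle L_x(\alpha),\overline{f_M}\rangle=i_M(L_x(\alpha))=0$ for all $x$. Your direct-computation strategy is a perfectly good alternative once the inversion is fixed.
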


\begin{proof}
Suppose $\alpha \in M$. Then $i_M (\alpha) = 0$. Because $L_x (\alpha) = x^{-1} \ast \alpha$ for $x \in G$ and $M$ is an ideal, $i_M \in Z(L_x (\alpha))$ for each $x \in G$. We now obtain
\[ 0 = i_M (L_x (\alpha)) = \sum_{g \in G} a_{xg} i_M(g) = \langle L_x (\alpha), \overline{f_M} \rangle \]
for all $x \in G$. By Lemma \ref{zerodivtrans} $\alpha \ast \widetilde{f_M} = 0$.

Conversely, let $\alpha \in \mathbb{C}G$ and assume that $\alpha \ast \widetilde{f_M} = 0$. Lemma \ref{zerodivtrans} says that $\langle L_x (\alpha), \overline{f_M} \rangle = 0$ for all $x \in G$. In particular
\[ \langle \alpha, \overline{f_M} \rangle = \sum_{g \in G} a_g i_M(g) = i_M(\alpha). \]
Thus, $\alpha \in M$.
\end{proof}

We now state and prove a Wiener type result.
\begin{Thm} \label{wienerabelian}
Let $G$ be a discrete abelian group with $r_0(G) < 2^{\omega}$ and let $I$ be an ideal in $\mathbb{C}G$. Then $Z(I) = \emptyset$ if and only if $I = \mathbb{C}G$. 
\end{Thm}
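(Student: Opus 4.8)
The plan is to prove the two directions separately, with the content lying entirely in the implication $Z(I) = \emptyset \Rightarrow I = \mathbb{C}G$. The reverse direction is immediate: if $I = \mathbb{C}G$, then $1 \in I$, and since $i_M(1) = 1 \neq 0$ for every maximal ideal $M$, no $i_M$ can lie in $Z(1)$, hence $Z(I) \subseteq Z(1) = \emptyset$. (One should also note that $Z(\mathbb{C}G) = \emptyset$ trivially forces $Z(I) = \emptyset$.)

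For the forward direction I would argue by contraposition: suppose $I \neq \mathbb{C}G$ and produce an element of $Z(I)$. Since $\mathbb{C}G$ is a commutative ring with identity, any proper ideal $I$ is contained in some maximal ideal $M$. I claim $i_M \in Z(I)$. Indeed, for every $\alpha \in I$ we have $\alpha \in M$, so $i_M(\alpha) = 0$ in $\mathbb{C}G/M$, which by the earlier identification of $\mathbb{C}G/M$ with $\mathbb{C}$ (from the proof of Theorem 3 in \cite{laczkovichgszekelyhidi04}) means exactly that $i_M(\alpha) = 0$; thus $i_M \in Z(\alpha)$. Taking the intersection over all $\alpha \in I$ gives $i_M \in Z(I)$, so $Z(I) \neq \emptyset$. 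This completes the contrapositive.

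The main structural input is the fact, recorded just before the theorem statement, that for a discrete abelian $G$ with $r_0(G) < 2^{\omega}$ every maximal ideal $M$ of $\mathbb{C}G$ has $\mathbb{C}G/M \cong \mathbb{C}$, so that $i_M$ is genuinely a $\mathbb{C}$-algebra homomorphism $\mathbb{C}G \to \mathbb{C}$ and the zero sets $Z(\alpha)$ are well defined as subsets of the set of such homomorphisms. With that in hand the argument is purely formal — it is essentially the statement that "every proper ideal lies in a maximal ideal" translated through the dictionary $M \leftrightarrow i_M$ — and requires no analytic estimates. I do not anticipate a serious obstacle; the only point needing care is to make sure the hypothesis $r_0(G) < 2^{\omega}$ is invoked precisely where it is needed, namely to guarantee $\mathbb{C}G/M \cong \mathbb{C}$ (Hilbert's Nullstellensatz over $\mathbb{C}$ fails for uncountably generated algebras, which is why the rank restriction appears), and not to smuggle in anything stronger. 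Proposition \ref{killedbyfm} is not logically needed for this particular theorem, though it explains the link to the Pompeiu problem that makes Theorem \ref{wienerabelian} useful downstream.
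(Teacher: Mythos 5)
Your proof is correct and takes essentially the same route as the paper: the substantive direction is handled identically, by embedding the proper ideal $I$ in a maximal ideal $M$ and observing that $i_M \in Z(I)$. The only difference is in the trivial direction, where the paper detours through Proposition \ref{killedbyfm} (deducing $\alpha \ast \widetilde{f_M} = 0$ for all $\alpha \in I$ and concluding $1 \notin I$ because $\widetilde{f_M} \neq 0$), whereas you note directly that $i_M(1) = 1 \neq 0$; your shortcut is valid and, as you observe, makes Proposition \ref{killedbyfm} logically unnecessary for this particular theorem.
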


\begin{proof}
Assume $Z(I) \neq \emptyset$. Then there exists a maximal ideal $M$ in $\mathbb{C}G$ for which $i_M (\alpha) = 0$ for each $\alpha \in I$. The kernel of $i_M$ is precisely $M$, so $\alpha \in M$. By Proposition \ref{killedbyfm}, $\alpha \ast \widetilde{f_M} = 0$ for all $\alpha \in I$. Thus $1 \notin I$ because $1 \ast \widetilde{f_M} = \widetilde{f_M}$ and $\widetilde{f_M}  \neq 0$. Hence, $I \neq \mathbb{C}G.$

Now suppose $I \neq \mathbb{C}G$. Let $M$ be a maximal ideal that contains $I$. Then $i_M(\alpha) = 0$ for each $\alpha \in I$. Hence $i_M \in Z(I)$.
\end{proof}

We now prove Theorem \ref{abelianchar}. Assume $\bigcap_{K \in \mathcal{K}} Z(\chi_K) = \emptyset$. Let $I$ be the ideal in $\mathbb{C}G$ generated by the functions $\chi_K, K \in \mathcal{K}$. Then $Z(I) = \emptyset$. Suppose that $\mathcal{K}$ does not have the Pompeiu property. Proposition \ref{pompeiuzerodiv} says that there exists a nonzero function $f \in \mathcal{F}(G)$ for which $\chi_K \ast \tilde{f} = 0$ for all $K \in \mathcal{K}$. Thus $I \neq \mathbb{C}G$. Using Theorem \ref{wienerabelian} we obtain the contradiction $Z(I) \neq \emptyset$. Thus $\mathcal{K}$ has the Pompeiu property. 

Conversely, assume $\bigcap_{K \in \mathcal{K}} Z(\chi_K) \neq \emptyset$. Then there exists a maximal ideal $M$ in $\mathbb{C}G$ such that $i_M \in Z(\chi_K)$ for each $K \in \mathcal{K}$. Thus $\chi_K \in M$ for each $K \in \mathcal{K}$. By combining Proposition \ref{killedbyfm} and Proposition \ref{pompeiuzerodiv} we obtain that $\mathcal{K}$ does not have the Pompeiu property. The proof of Theorem \ref{abelianchar} is complete.

\section{Proof of Theorem \ref{pompeiufreechar}} \label{freegroupspompeiu}
Let $F_k$ denote the free group on $k$ generators, where $2 \leq k \in \mathbb{N}$. We start the section by giving some definitions and results that will be needed for the proof of Theorem \ref{pompeiufreechar}. After giving these results we will prove Theorem \ref{pompeiufreechar}.  We will conclude the section with a discussion about how our work is connected with some of the results in \cite{CohenPicardello88}.

A slight modification to the proof of \cite[Lemma 6.1]{LinnellPuls01} gives the following elementary result, which will be needed later.
\begin{Lem} \label{elementary}
Let $x,y \in F_k$, with $\vert x \vert = \vert y \vert$, and let $0 \leq m,n \in \mathbb{Z}$. Then 
\[ \langle x \ast \chi_m, \chi_n \rangle = \langle y \ast \chi_m, \chi_n \rangle. \]
\end{Lem}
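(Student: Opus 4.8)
The plan is to translate the pairing into a counting problem and then argue by induction on $m$, using the convolution identities for $(\mathbb{C}F_k)_r$ recalled from \cite{Figa_Picardello83}. Since $x \ast \chi_m = \sum_{w \in E_m} xw$ is the characteristic function of the set $xE_m$ and $\chi_n$ is the characteristic function of $E_n$, we have
\[ \langle x \ast \chi_m, \chi_n \rangle = |xE_m \cap E_n| = \big|\{\, w \in E_m : |xw| = n \,\}\big|, \]
so it suffices to show that this number depends only on $|x|$ (and on $m$, $n$, $k$), not on $x$ itself. Accordingly, I would prove by induction on $m$ the statement: for every $x \in F_k$ and every $0 \le n \in \mathbb{Z}$, the integer $\langle x \ast \chi_m, \chi_n \rangle$ depends only on $|x|$ and $n$. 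The lemma is then immediate by taking two elements $x,y$ with $|x|=|y|$.

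The base cases $m=0$ and $m=1$ are direct. For $m=0$ the pairing is $1$ if $|x|=n$ and $0$ otherwise. For $m=1$, if $x=e$ then all $2k$ elements $w\in E_1$ satisfy $|xw|=1$, while if $|x|\ge 1$ exactly one $w\in E_1$ (the inverse of the last letter of $x$) gives $|xw|=|x|-1$ and the remaining $2k-1$ give $|xw|=|x|+1$; in every case the count is a function of $|x|$ and $n$ alone. For the inductive step with $m+1\ge 2$, the relations $\chi_1\ast\chi_1=\chi_2+2k\,\chi_0$ and $\chi_1\ast\chi_j=\chi_{j+1}+\omega^2\chi_{j-1}$ for $j\ge 2$ give $\chi_{m+1}=\chi_1\ast\chi_m-c_m\chi_{m-1}$ with $c_1=2k$ and $c_m=\omega^2$ for $m\ge 2$. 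Convolving on the left by $x$ and pairing with $\chi_n$,
\[ \langle x \ast \chi_{m+1}, \chi_n \rangle = \sum_{w \in E_1} \langle (xw) \ast \chi_m, \chi_n \rangle \;-\; c_m \langle x \ast \chi_{m-1}, \chi_n \rangle. \]
By the inductive hypothesis each summand $\langle (xw)\ast\chi_m,\chi_n\rangle$ depends only on $|xw|$ and $n$, and the last term depends only on $|x|$ and $n$. Since the multiset $\{\,|xw| : w\in E_1\,\}$ is determined by $|x|$ (it is $2k$ copies of $1$ when $x=e$, and otherwise $2k-1$ copies of $|x|+1$ together with one copy of $|x|-1$), the right-hand side depends only on $|x|$ and $n$, closing the induction.

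The argument is essentially routine; there is no serious obstacle, but two points deserve care. First, one must keep the base case $x=e$ separate from the case $x\ne e$, both in the analysis for $m=1$ and in the description of the multiset $\{\,|xw|:w\in E_1\,\}$. Second, the induction variable is $m$, not the length of the group element, so the inductive hypothesis must be stated for \emph{all} $x\in F_k$; this is what allows it to be applied to the elements $xw$ even though $|xw|$ may be larger than $|x|$. Conceptually the statement just reflects the elementary fact that in the $2k$-regular tree the number of vertices at distance $m$ from $x$ and distance $n$ from $e$ depends only on $d(e,x)=|x|$ (compare the geodesics $[e,x]$, $[e,\cdot]$, $[x,\cdot]$), but the inductive proof above is the one closest to \cite[Lemma 6.1]{LinnellPuls01}.
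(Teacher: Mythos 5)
Your proof is correct. The paper offers no proof of this lemma, only the remark that it follows from a slight modification of \cite[Lemma 6.1]{LinnellPuls01}; your argument --- reading $\langle x \ast \chi_m, \chi_n \rangle$ as the count $\vert xE_m \cap E_n\vert$ and inducting on $m$ via $\chi_{m+1} = \chi_1 \ast \chi_m - c_m \chi_{m-1}$, with the base cases $m=0,1$ and the $x=e$ versus $x\neq e$ distinction handled correctly --- is precisely the intended argument and fills in the details the paper leaves to the reference.
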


A function $\psi$ on $F_k$ is said to be {\em spherical} if and only if it defines a multiplicative linear functional on $(\mathbb{C}F_k)_r$. See pages 33-34, especially Lemma 1.5, of \cite{Figa_Picardello83} for a more detailed discussion about spherical functions on $F_k$. Let $z \in \mathbb{C}$ and define $\phi_z \in \mathcal{F}(F_k)$ by 
\[ \phi_z = \sum_{n=0}^{\infty} \frac{p_n(z)}{e_n} \chi_n. \]
Now,
\begin{align*}
\phi_z \ast \frac{\chi_1}{2k}  & =  \frac{p_1(z)}{e_1}\chi_0 + \sum_{n=1}^{\infty} \left( \frac{p_{n-1} (z)}{2ke_{n-1}} + \frac{\omega^2 p_{n+1}(z)}{2ke_{n+1}}\right)\chi_n \\
     & =  \frac{z}{2k} \sum_{n=0}^{\infty} \frac{p_n(z)}{e_n} \chi_n \\
     & =  \frac{z}{2k} \phi_z.
\end{align*}

By \cite[Corollary 1.7]{Figa_Picardello83} $\phi_z$ is spherical, equivalently it is a multiplicative linear functional on $(\mathbb{C}F_k)_r$. We can now prove 
\begin{Prop} \label{radialspan}
Let $\alpha \in (\mathbb{C}F_k)_r$ and let $z \in \mathbb{C}$. Then $\alpha \ast \phi_z = 0$ if and only if $\langle \chi_n \ast \alpha, \overline{\phi_z} \rangle = 0$ for all integers $n \geq 0$.
\end{Prop}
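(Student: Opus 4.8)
The plan is to mimic the proof of Proposition \ref{killedbyfm}, using the spherical function $\phi_z$ in place of the character $f_M$ and exploiting that on $F_k$ the relevant pairings are homogeneous in the length. First I would record two harmless facts. Since $\phi_z=\sum_{n\ge 0}\tfrac{p_n(z)}{e_n}\chi_n$ and the inversion map $x\mapsto x^{-1}$ carries each $E_n$ onto itself, we have $\widetilde{\phi_z}=\phi_z$; likewise $\widetilde{\chi_n}=\chi_n$, so, using $L_g(\alpha)=g^{-1}\ast\alpha$, we get $\sum_{|g|=n}L_g(\alpha)=\bigl(\sum_{|g|=n}g^{-1}\bigr)\ast\alpha=\chi_n\ast\alpha$. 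With $\widetilde{\phi_z}=\phi_z$ in hand, Lemma \ref{zerodivtrans} applied to $\alpha\in\mathbb{C}F_k$ and $\phi_z\in\mathcal{F}(F_k)$ already says that $\alpha\ast\phi_z=0$ if and only if $\langle L_g(\alpha),\overline{\phi_z}\rangle=0$ for every $g\in F_k$.

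It remains to pass from ``for every $g$'' to ``for every length $n\ge 0$''. Write $\alpha=\sum_{j}a_j\chi_j$ (a finite sum) and $\overline{\phi_z}=\sum_{l}\overline{p_l(z)/e_l}\,\chi_l$. By bilinearity, $\langle L_g(\alpha),\overline{\phi_z}\rangle=\langle g^{-1}\ast\alpha,\overline{\phi_z}\rangle$ is a finite $\mathbb{C}$-linear combination of the numbers $\langle g^{-1}\ast\chi_j,\chi_l\rangle$, and by Lemma \ref{elementary} each of these depends only on $|g^{-1}|=|g|$; hence $\langle L_g(\alpha),\overline{\phi_z}\rangle$ depends only on $|g|$. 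Consequently, for any fixed element $g_n$ of length $n$,
\[
\langle\chi_n\ast\alpha,\overline{\phi_z}\rangle=\Bigl\langle\sum_{|g|=n}L_g(\alpha),\overline{\phi_z}\Bigr\rangle=\sum_{|g|=n}\langle L_g(\alpha),\overline{\phi_z}\rangle=e_n\,\langle L_{g_n}(\alpha),\overline{\phi_z}\rangle,
\]
and since $e_n\ge 1$ this vanishes exactly when $\langle L_g(\alpha),\overline{\phi_z}\rangle=0$ for all $g$ with $|g|=n$. Letting $n$ run over all nonnegative integers, the condition ``$\langle\chi_n\ast\alpha,\overline{\phi_z}\rangle=0$ for all $n\ge 0$'' is equivalent to ``$\langle L_g(\alpha),\overline{\phi_z}\rangle=0$ for all $g\in F_k$'', which by the previous paragraph is equivalent to $\alpha\ast\phi_z=0$. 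This proves the proposition.

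An alternative, more computational route bypasses Lemma \ref{elementary}: from the recursion (\ref{eq:recpolynomials}) together with the displayed identity $\phi_z\ast\tfrac{\chi_1}{2k}=\tfrac{z}{2k}\phi_z$ one obtains by induction $\chi_n\ast\phi_z=p_n(z)\phi_z$ for all $n$, whence $\alpha\ast\phi_z=\hat\alpha(z)\phi_z$ and $\langle\chi_n\ast\alpha,\overline{\phi_z}\rangle=p_n(z)\hat\alpha(z)$; both sides of the claimed equivalence then collapse to the single scalar condition $\hat\alpha(z)=0$, using that $\phi_z\ne 0$ (its $\chi_0$-coefficient is $p_0(z)=1$) and $p_0\equiv 1$. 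I would present the first route, since it runs exactly parallel to the abelian development and isolates the only substantive input — the length-homogeneity furnished by Lemma \ref{elementary}. That homogeneity, combined with the bookkeeping around $L_g=g^{-1}\ast(\cdot)$, $\widetilde{\phi_z}=\phi_z$, and $\sum_{|g|=n}L_g(\alpha)=\chi_n\ast\alpha$, is the one place requiring care; everything else is formal manipulation of the pairing and of convolution.
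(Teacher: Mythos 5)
Your proof is correct and follows essentially the same route as the paper: both directions reduce to Lemma \ref{zerodivtrans} via $\widetilde{\phi_z}=\phi_z$ and the identity $\chi_n\ast\alpha=\sum_{g\in E_n}L_g(\alpha)$, with Lemma \ref{elementary} supplying the constancy of $\langle L_g(\alpha),\overline{\phi_z}\rangle$ on spheres. The only minor difference is that the paper's converse additionally invokes the multiplicativity of $\overline{\phi_z}=\phi_{\overline{z}}$ to compute the explicit value $\langle\chi_n\ast\alpha,\overline{\phi_z}\rangle=\hat{\alpha}(z)p_n(z)$ --- essentially your ``alternative computational route'' --- whereas your main argument gets by with the constancy on spheres alone.
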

\begin{proof}
Using $\chi_n \ast \alpha = \sum_{g \in E_n} L_g \alpha$ with Lemma \ref{zerodivtrans}, $\langle \chi_n \ast \alpha, \overline{\phi_z} \rangle = 0$ for each $n \geq 0.$

Conversely, write $\alpha = \sum_{j=0}^m a_j \chi_j$. Then 
\begin{align*}
\langle \alpha, \overline{\phi_z} \rangle & = \sum_{j,n} \frac{a_j p_n(z)}{e_n} \langle \chi_j, \chi_n \rangle \\
                                          & = \sum_{j=0}^m a_j p_j(z) = \hat{\alpha}(z).
\end{align*}
The function $\overline{\phi_z}$ is a multiplicative linear functional on $(\mathbb{C}F_k)_r$ because $\overline{\phi_z} = \phi_{\overline{z}}$. Thus $\langle \chi_n \ast \alpha, \overline{\phi_z} \rangle = \hat{\alpha}(z)p_n(z).$ Lemma \ref{elementary} allows us to deduce that if $x \in F_k$ and $\vert x \vert = n$, then $\langle x \ast \alpha, \overline{\phi_z} \rangle = \hat{\alpha}(z) \frac{p_n(z)}{e_n}.$ The result now follows from Lemma \ref{zerodivtrans}.
\end{proof}

\begin{Cor}\label{containzeros}
Let $0 \neq \alpha \in (\mathbb{C}F_k)_r$ and let $z \in \mathbb{C}$. Then $\alpha \ast \phi_z = 0$ if and only if $z \in Z(\alpha).$
\end{Cor}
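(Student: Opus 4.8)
The plan is to read this off directly from Proposition \ref{radialspan}, together with the intermediate computation carried out inside its proof. The first step is to invoke Proposition \ref{radialspan} to replace the single condition $\alpha \ast \phi_z = 0$ by the infinite family of conditions $\langle \chi_n \ast \alpha, \overline{\phi_z} \rangle = 0$ for every integer $n \geq 0$. The content of the corollary is that this family collapses to one equation.

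The second step is to recall the identity established (not merely stated) in the proof of Proposition \ref{radialspan}: since $\overline{\phi_z} = \phi_{\overline{z}}$ is multiplicative on $(\mathbb{C}F_k)_r$ and $\langle \alpha, \overline{\phi_z} \rangle = \hat{\alpha}(z)$, one has $\langle \chi_n \ast \alpha, \overline{\phi_z} \rangle = \hat{\alpha}(z)\, p_n(z)$ for all $n \geq 0$. Hence $\alpha \ast \phi_z = 0$ if and only if $\hat{\alpha}(z)\, p_n(z) = 0$ for every $n \geq 0$. Taking $n = 0$ and using $p_0(z) = 1$ from (\ref{eq:recpolynomials}) forces $\hat{\alpha}(z) = 0$, that is, $z \in Z(\alpha)$. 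Conversely, if $z \in Z(\alpha)$ then $\hat{\alpha}(z) = 0$, so each product $\hat{\alpha}(z)\, p_n(z)$ vanishes and $\alpha \ast \phi_z = 0$ by the same equivalence.

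I do not expect a genuine obstacle here; the only points requiring a little care are (i) to cite the displayed formula $\langle \chi_n \ast \alpha, \overline{\phi_z} \rangle = \hat{\alpha}(z)\, p_n(z)$ from the body of the proof of Proposition \ref{radialspan} rather than only its statement, and (ii) to observe that $\phi_z$ is never the zero function (its $\chi_0$-coefficient is $p_0(z)/e_0 = 1$), so that the equivalence is not vacuous. The hypothesis $\alpha \neq 0$ is not actually used in the argument — if $\alpha = 0$ then $Z(\alpha) = \mathbb{C}$ and both sides hold for every $z$ — but it is kept because it is the case of interest, where $\hat{\alpha}$ is a nonzero polynomial and $Z(\alpha)$ is a finite set of complex numbers.
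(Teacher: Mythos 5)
Your proposal is correct and follows essentially the same route as the paper: both deduce the corollary from Proposition \ref{radialspan} together with the identity $\langle \chi_n \ast \alpha, \overline{\phi_z} \rangle = \hat{\alpha}(z)\, p_n(z)$ established in its proof. Your version merely makes explicit the step (taking $n=0$ and using $p_0(z)=1$) that the paper leaves to the reader with ``the corollary now follows.''
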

\begin{proof} 
We saw in the proof of the above proposition that $\langle \chi_n \ast \alpha, \overline{\phi_z} \rangle = \hat{\alpha}(z) p_n(z)$ for each positive integer $n$. The corollary now follows.
\end{proof}

We need the following fact about maximal ideals in $(\mathbb{C}F_k)_r$.
\begin{Prop} \label{maximalideals}
If $M$ is a maximal ideal in $(\mathbb{C}F_k)_r$, then $(\mathbb{C}F_k)_r / M$ is isomorphic to $\mathbb{C}$.
\end{Prop}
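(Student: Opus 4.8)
The plan is to exhibit $(\mathbb{C}F_k)_r$ as a quotient of a polynomial ring and then reduce the statement to the analogous fact for $\mathbb{C}[z]$. Recall from the multiplication rules for $\chi_1 \ast \chi_n$ that $(\mathbb{C}F_k)_r$ is a commutative ring generated by $\chi_0$ (the identity) and $\chi_1$; moreover the recursion defining the polynomials $p_n(z)$ is exactly the recursion satisfied by the $\chi_n$ under convolution by $\chi_1$, so $\chi_n = p_n(\chi_1)$ for every $n \geq 0$. Consequently the map $z \mapsto \chi_1$ extends to a surjective ring homomorphism $\pi \colon \mathbb{C}[z] \to (\mathbb{C}F_k)_r$. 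I claim $\pi$ is in fact an isomorphism: the $\chi_n$ are linearly independent in $\mathbb{C}F_k$ (they are supported on disjoint sets $E_n$), and since $p_n$ has degree $n$, the family $\{p_n(z)\}_{n\geq 0}$ is a basis of $\mathbb{C}[z]$ mapping to the linearly independent family $\{\chi_n\}$; hence $\pi$ is injective. Therefore $(\mathbb{C}F_k)_r \cong \mathbb{C}[z]$ as $\mathbb{C}$-algebras.

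Given this identification, let $M$ be a maximal ideal of $(\mathbb{C}F_k)_r$, and let $\mathfrak{m} = \pi^{-1}(M)$ be the corresponding maximal ideal of $\mathbb{C}[z]$. Since $\mathbb{C}$ is algebraically closed, every maximal ideal of $\mathbb{C}[z]$ has the form $(z - z_0)$ for some $z_0 \in \mathbb{C}$, and the evaluation map $\mathrm{ev}_{z_0} \colon \mathbb{C}[z] \to \mathbb{C}$ induces an isomorphism $\mathbb{C}[z]/\mathfrak{m} \cong \mathbb{C}$. Composing with the isomorphism induced by $\pi$ gives $(\mathbb{C}F_k)_r / M \cong \mathbb{C}[z]/\mathfrak{m} \cong \mathbb{C}$, which is the desired conclusion. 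Concretely, the quotient map $(\mathbb{C}F_k)_r \to \mathbb{C}$ sends $\alpha = \sum_j a_j \chi_j$ to $\sum_j a_j p_j(z_0) = \hat{\alpha}(z_0)$, so it is precisely evaluation of $\hat{\alpha}$ at the point $z_0$ attached to $M$; this also reconciles the notation $Z(\alpha) = \{z \mid \hat\alpha(z) = 0\}$ with the ideal-theoretic picture.

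The only genuinely substantive point is the claim that $\pi$ is injective, equivalently that there are no nontrivial polynomial relations among $\chi_0$ and $\chi_1$ under convolution; the rest is bookkeeping. This injectivity is immediate once one observes that $\chi_n = p_n(\chi_1)$ together with the disjointness of the supports $E_n$ forces any polynomial $q$ with $q(\chi_1) = 0$ to have all coefficients zero when expanded in the basis $\{p_n\}$. I expect this to be the main (and essentially only) obstacle, and it is mild: the degree bookkeeping for the $p_n$ and the support disjointness of the $\chi_n$ settle it directly. One caveat worth checking is that the $n=0,1$ cases of the recursion are handled correctly — $p_0, p_1, p_2$ are given explicitly and the formula $\chi_1 \ast \chi_1 = \chi_2 + 2k\chi_0$ matches $p_2(z) = z^2 - 2k$ — so the identification $\chi_n = p_n(\chi_1)$ holds for all $n \geq 0$ without exception.
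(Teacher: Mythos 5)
Your proof is correct and follows essentially the same route as the paper: both establish an isomorphism $(\mathbb{C}F_k)_r \cong \mathbb{C}[z]$ (the paper via the map $T\alpha = \sum_i a_i p_i(x)$, you via the inverse map determined by $z \mapsto \chi_1$, with injectivity in both cases coming from the degree count $\deg p_n = n$ and the disjoint supports of the $\chi_n$) and then invoke the fact that $\mathbb{C}[z]/J \cong \mathbb{C}$ for every maximal ideal $J$. The only cosmetic difference is that you get the ring-homomorphism property for free from the universal property of the polynomial ring, whereas the paper verifies multiplicativity of $T$ directly using the spherical functions $\phi_z$.
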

\begin{proof}
Let $\mathbb{C}[x]$ be the polynomial ring over $\mathbb{C}$. The proposition will be proved if we can show $(\mathbb{C}F_k)_r$ is isomorphic to $\mathbb{C}[x]$, since $\mathbb{C}[x] / J$ is isomorphic to $\mathbb{C}$ for each maximal ideal $J$ in $\mathbb{C}[x]$. For $\alpha = \sum_{i=0}^n a_i \chi_i \in (\mathbb{C}F_k)_r$, define $T\alpha \in \mathbb{C}[x]$ by 
\[ T \alpha = \sum_{i=0}^n a_i p_i(x). \]
We will now show that $T$ gives the desired isomorphism. Clearly $T$ is linear. Note that if $z \in \mathbb{C}$, then $T\alpha (z) = \langle \phi_z, \alpha \rangle$. Combining this observation with the fact $\phi_z$ is multiplicative on $(\mathbb{C}F_k)_r$ for each $z \in \mathbb{C}$, we obtain $T(\alpha \ast \beta) = (T\alpha)(T\beta)$, where $\beta \in (\mathbb{C}F_k)_r$. If $T \alpha = \sum_{i=0}^n a_i p_i(x) = 0$, then it must be the case $a_0 = a_1 = \cdots = a_n =0$. Thus $T$ is one-to-one. Let $q \in \mathbb{C}[x]$. We may assume without loss of generality that $q$ is a monic polynomial. Write $q = (x - c_1)(x-c_2) \cdots (x-c_n)$, where $c_k \in \mathbb{C}$ and $n$ is the degree of $q$. The function $(\chi_1 - c_1) \ast (\chi_1 - c_2) \ast \cdots \ast (\chi_1 - c_n) \in (\mathbb{C}F_k)_r$ and $T$ maps this element to $q$, so $T$ is onto. Hence $T$ is an isomorphism between $(\mathbb{C} F_k)_r$ and $\mathbb{C}[x]$ and the proposition now follows.
\end{proof} 
Using the polynomial relations (\ref{eq:recpolynomials}) with an induction argument we obtain 
\[ \chi_n = p_n(\chi_1). \]
We can now state and prove the following Wiener type result for $F_k$, which corresponds to Theorem \ref{wienerabelian}.

\begin{Thm}\label{wienertypefree}
Let $F_k$ be the free group with $2 \leq k \in \mathbb{N}$. Let $I$ be an ideal in $(\mathbb{C}F_k)_r$. Then $Z(I) = \emptyset$ if and only if $I = (\mathbb{C}F_k)_r$. 
\end{Thm}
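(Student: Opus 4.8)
The plan is to deduce Theorem \ref{wienertypefree} from Proposition \ref{maximalideals} together with the identity $\chi_n = p_n(\chi_1)$ in $(\mathbb{C}F_k)_r$ recorded just above, in close parallel with the proof of Theorem \ref{wienerabelian}.

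First I would dispose of the implication $I = (\mathbb{C}F_k)_r \Rightarrow Z(I) = \emptyset$. The ring $(\mathbb{C}F_k)_r$ is commutative with identity $\chi_0$, so this hypothesis gives $\chi_0 \in I$; but $\hat{\chi_0} = p_0 = 1$ has no complex zero, hence $Z(\chi_0) = \emptyset$, and therefore $Z(I) = \bigcap_{\alpha \in I} Z(\alpha) \subseteq Z(\chi_0) = \emptyset$.

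For the reverse implication I would argue by contraposition: suppose $I \neq (\mathbb{C}F_k)_r$. Since $(\mathbb{C}F_k)_r$ is commutative with identity, $I$ is contained in some maximal ideal $M$. By Proposition \ref{maximalideals} the composite of the quotient map $(\mathbb{C}F_k)_r \to (\mathbb{C}F_k)_r/M$ with an isomorphism $(\mathbb{C}F_k)_r/M \cong \mathbb{C}$ is a surjective ring homomorphism $i_M \colon (\mathbb{C}F_k)_r \to \mathbb{C}$ whose kernel is $M \supseteq I$. Put $z := i_M(\chi_1) \in \mathbb{C}$. Since $i_M$ is multiplicative and $\chi_n = p_n(\chi_1)$, one gets $i_M(\chi_n) = p_n(z)$ for every $n \geq 0$, hence $i_M(\alpha) = \hat{\alpha}(z)$ for every $\alpha = \sum_j a_j \chi_j \in (\mathbb{C}F_k)_r$. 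In particular every $\alpha \in I$ satisfies $\hat{\alpha}(z) = i_M(\alpha) = 0$, i.e. $z \in Z(\alpha)$; thus $z \in Z(I)$ and $Z(I) \neq \emptyset$. (Alternatively, once such a $z \in Z(I)$ is produced one may invoke Corollary \ref{containzeros} to get $\alpha \ast \phi_z = 0$ for all $\alpha \in I$, while $\phi_z \neq 0$ because its $\chi_0$-coefficient is $p_0(z)/e_0 = 1$, again forcing $\chi_0 \notin I$.)

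I do not expect a genuine obstacle: the real content has already been isolated in Proposition \ref{maximalideals} (every residue field of $(\mathbb{C}F_k)_r$ is exactly $\mathbb{C}$, so that $i_M(\chi_1)$ is an honest complex number) and in the passage between the convolution/zero-divisor picture and the polynomial picture supplied by $\phi_z$ and by $\chi_n = p_n(\chi_1)$. A mild variant, if one prefers to stay on the polynomial side, is to transport $I$ through the isomorphism $T \colon (\mathbb{C}F_k)_r \to \mathbb{C}[x]$ of Proposition \ref{maximalideals}: then $T(I)$ is a principal ideal $(g)$, the set $Z(I)$ is the zero locus of $g$, and the fundamental theorem of algebra shows this locus is empty precisely when $g$ is a nonzero constant, i.e. when $T(I) = \mathbb{C}[x]$. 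The only bookkeeping needed there is the identification of $Z(\alpha)$ with the vanishing set of $\hat{\alpha} = T\alpha$, which is immediate from the definitions.
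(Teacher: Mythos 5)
Your proof is correct and follows essentially the same route as the paper: the substantive direction passes through a maximal ideal $M \supseteq I$, Proposition \ref{maximalideals}, and the identity $\chi_n = p_n(\chi_1)$ to produce $z = i_M(\chi_1) \in Z(I)$, exactly as in the paper's argument. The only difference is cosmetic: for the easy direction the paper invokes Corollary \ref{containzeros} and the nonvanishing of $\chi_0 \ast \phi_z$, whereas you observe directly that $\widehat{\chi_0} = p_0 = 1$ never vanishes; both are immediate.
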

\begin{proof}
Suppose $Z(I) \neq \emptyset$ and let $z \in Z(I)$. Then for each $\alpha \in I, \alpha \ast \phi_z = 0$ by Corollary \ref{containzeros}. Since $\chi_0 \ast \phi_z \neq 0$ it follows that $I \neq (\mathbb{C}F_k)_r$. 

Conversely, assume $I \neq (\mathbb{C}F_k)_r$. Let $M$ be a maximal ideal in $(\mathbb{C}F_k)_r$ that contains $I$. Then $(\mathbb{C} F_k)_r / M$ is isomorphic to $\mathbb{C}$. Hence, there is a ring homomorphism $\psi \colon (\mathbb{C}F_k)_r \mapsto \mathbb{C}$ for which the kernel of $\psi$ is $M$. The multiplicative linear functional $\psi$ is determined by its value on $\chi_1$. Set $z = \psi(\chi_1)$. Because $\chi_n = p_n (\chi_1)$, we obtain $\psi(\chi_n) = p_n(\psi(\chi_1)) = p_n(z)$. Consequently, $\psi = \phi_z$. Furthermore, if $\alpha \in I$, then $\langle \chi_n \ast \alpha, \overline{\phi_z} \rangle = 0$ for each nonnegative integer $n$. By Proposition \ref{radialspan} and Corollary \ref{containzeros}, $z \in Z(\alpha)$ for each $\alpha \in I$. Therefore, $z \in Z(I)$.
\end{proof}

We now prove Theorem \ref{pompeiufreechar}. First assume $\bigcap_{K \in \mathcal{K}} Z(\chi_K) \neq \emptyset$ and let $z \in \bigcap_{K \in \mathcal{K}} Z(\chi_K)$. Then $\chi_K \ast \phi_z = 0$ for each $K \in \mathcal{K}$. Since $\chi_K$ and $\phi_z$ are both radial, it follows from Proposition \ref{pompeiuzerodiv} that $\mathcal{K}$ does not have the Pompeiu property.

Now assume $\bigcap_{K \in \mathcal{K}} Z(\chi_K) = \emptyset$. Let $I$ be the ideal in $(\mathbb{C}F_k)_r$ generated by $\{ \chi_K \}_{K \in \mathcal{K}}$. Then $Z(I) = \emptyset$ and by Theorem \ref{wienertypefree}, $I = (\mathbb{C}F_k)_r$. If $f \in \mathcal{F}(F_k)$ and $ \chi_K \ast f = 0$ for each $K \in \mathcal{K}$, then $\chi_0 \ast f = 0$. Hence, it must the case $f = 0$ and $\mathcal{K}$ has the Pompeiu property. The proof of Theorem \ref{pompeiufreechar} is now complete.

A special case of the Pompeiu problem is the two circle problem. 

\begin{Prob} \label{twocircles}
Let $r$ and $s$ be natural numbers. Is $f = 0$ the only function on $G$ that satisfies
\begin{equation}
\sum_{d(x, y) =r} f(y) = 0 = \sum_{d(x,y) = s} f(y)  \label{eq:twocirclecond}
\end{equation}
for all $x \in G$?
\end{Prob}

Cohen and Picardello studied this problem in \cite{CohenPicardello88} for homogeneous trees. The Cayley graph of the free group $F_k, 2 \leq k \in \mathbb{N}$, is a homogeneous tree of degree $2k$. We now proceed to explain how our results overlap with some of the results in \cite{CohenPicardello88}.

Let $K_n$ be the set of elements in $F_k$ that have length $n$. Each element of $K_n$ has distance $n$ from the identity element $e$ of $F_k$. From a geometric point of view we can think of $K_n$ as a sphere in $F_k$ of radius $n$ with center $e$. Denote the characteristic function on $K_n$ by $\chi_n$. Set
\[ \phi_0 = \sum_{n=0}^{\infty} \frac{p_n(0)}{e_n} \chi_n = \sum_{n=0}^{\infty} \frac{(-1)^n}{(2k-1)^n} \chi_{2n}. \]
Since $\{0\} = Z(\chi_1)$, Corollary \ref{containzeros} says that $\chi_1 \ast \phi_0 =0$. Furthermore, $\phi_0 \in \ell^p(F_k)$ for $p > 2$. Because $\chi_1$ and $\phi_0$ are both radial, $\chi_1 \ast \widetilde{\phi_0} = \chi_1 \ast \phi_0 = 0$. Thus $\phi_0$ satisfies (\ref{eq:Pompeiu}) by Proposition \ref{pompeiuzerodiv}. So $K_1$ does not have the Pompeiu property when the class of functions under consideration contains $\ell^p(F_k)$ for some $p > 2$. In fact, if $n$ an odd natural number, then $0 \in Z(\chi_n)$ due to $\widehat{\chi_n} = p_n(z)$ being an odd polynomial. Thus $\chi_n \ast \widetilde{\phi_0} = 0$ and $K_n$ does not have the Pompeiu property for all odd natural numbers. We now move on to the two circle problem. 

Let $r$ and $s$ be natural numbers and set $\mathcal{K} = \{K_r, K_s\}$. Observe that if $x \in F_k$, then $xK_r$ and $xK_s$ are spheres in $F_k$ with center $x$ and radius $r$ and $s$ respectively. Consequently, $f=0$ is the only solution to (\ref{eq:twocirclecond}) if and only if $\mathcal{K}$ has the Pompeiu property. Theorem \ref{pompeiufreechar} tells us $\mathcal{K}$ has the Pompeiu property precisely when $\widehat{\chi_s} = p_s(z)$ and $\widehat{\chi_r} = p_r(z)$ do not have a common root. If $r$ and $s$ are both odd, then as we saw in the previous paragraph $0$ is a common root of both $\widehat{\chi_s}$ and $\widehat{\chi_r}$. Thus $\mathcal{K}$ does not have the Pompeiu property and $\phi_0$ is a nonzero function that satisfies (\ref{eq:twocirclecond}). This is essentially Theorem 1(a) of \cite{CohenPicardello88} for the case of homogeneous trees with positive even degree.

Now suppose $r$ and $s$ are both not odd. It was shown in \cite[Section 2]{CohenPicardello88} that $\widehat{\chi_r} = p_r(z)$ and $\widehat{\chi_s} = p_s(z)$ do not have a common root. Thus $\mathcal{K}$ has the Pompeiu property and $f =0$ is the only solution to (\ref{eq:twocirclecond}).

\section{Proof of Theorem \ref{sufficienttwocirc}} \label{meanvalue}
Recall that $F_k$ is the free group on $k$ generators, where $k \geq 2$. Before we prove Theorem \ref{sufficienttwocirc} we need to give several preliminary results. We start with the following crucial lemma, which connects the mean-value property with the problem of zero divisors on $F_k$.
\begin{Lem} \label{connecttozerodiv}
Let $f$ be a complex-valued function on $F_k$ and let $n \in \mathbb{N}$. Then $\sum_{y \in E_n} f(xy) = e_n f(x)$ for all $x \in F_k$ if and only if $f \ast (\chi_n - e_n) =0$.
\end{Lem}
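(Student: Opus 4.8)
The plan is to prove the equivalence by a single coefficient computation in the group ring. Although the convolution in Section \ref{preliminaries} was written with the element of $\mathbb{C}F_k$ on the left, the product $f \ast (\chi_n - e_n)$ with $f \in \mathcal{F}(F_k)$ on the left is equally well defined, since $\chi_n - e_n$ has finite support; the resulting formal sum always makes sense term by term. Writing $f = \sum_{g \in F_k} b_g\, g$ with $b_g = f(g)$, and $\chi_n - e_n = \sum_{h \in E_n} h - e_n\, e$, where $e$ denotes the identity of $F_k$ and $e_n$ on the right is shorthand for the scalar multiple $e_n\, e$ of the identity, one has
\[ f \ast (\chi_n - e_n) = \sum_{g \in F_k} b_g \Bigl( \sum_{h \in E_n} gh - e_n\, g \Bigr). \]

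Next I would fix $w \in F_k$ and read off the coefficient of $w$ on the right-hand side. In the double sum, the equation $gh = w$ with $h \in E_n$ is equivalent to $g = w h^{-1}$ with $h \in E_n$, so the contribution of that sum to the coefficient of $w$ is $\sum_{h \in E_n} b_{w h^{-1}}$, while the term $-e_n \sum_g b_g\, g$ contributes $-e_n b_w$. The one structural point needed is that $E_n$ is closed under inversion: for $x \in F_k$ the reduced word for $x^{-1}$ is obtained from that of $x$ by reversing the order of the letters and inverting each one, so $\vert x^{-1} \vert = \vert x \vert$. Hence the substitution $y = h^{-1}$ gives $\sum_{h \in E_n} b_{w h^{-1}} = \sum_{y \in E_n} b_{wy} = \sum_{y \in E_n} f(wy)$, and the coefficient of $w$ in $f \ast (\chi_n - e_n)$ equals
\[ \sum_{y \in E_n} f(wy) - e_n f(w). \]

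Finally, $f \ast (\chi_n - e_n) = 0$ means exactly that this coefficient vanishes for every $w \in F_k$, that is, $\sum_{y \in E_n} f(xy) = e_n f(x)$ for all $x \in F_k$ (renaming $w$ as $x$); this delivers both implications at once. The argument is purely computational, so I anticipate no genuine obstacle beyond the bookkeeping — in particular, being careful that the $\mathbb{C}F_k$-factor sits to the \emph{right} of $f$ here (matching the right translates $f(xy)$ in the mean-value property rather than the left translates of Problem \ref{rightpompeiu}), and invoking $E_n = E_n^{-1}$ at the single point where it is needed.
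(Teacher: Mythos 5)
Your proposal is correct and follows essentially the same route as the paper: both compute the value of $f \ast \chi_n$ at a point from the convolution formula, reduce it to $\sum_{y \in E_n} f(xy^{-1})$, and use the closure of $E_n$ under inversion to rewrite this as $\sum_{y \in E_n} f(xy)$, after which the equivalence is immediate. You are merely more explicit than the paper about extending the convolution to $\mathcal{F}(F_k) \ast \mathbb{C}F_k$ and about the point where $E_n = E_n^{-1}$ is invoked.
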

\begin{proof}
For $x \in F_k$,
\begin{eqnarray*}
f \ast \chi_n (x)   &  =  &  \sum_{y \in F_k} f(xy^{-1}) \chi_n(y)   \\
                             &   = & \sum_{y \in E_n} f(xy^{-1})   \\
                            &   =  &   \sum_{y \in E_n} f(xy). 
\end{eqnarray*}
Hence, $\sum_{y \in E_n} f(xy) = e_n f(x)$ for all $x \in F_k$ if and only if $f \ast (\chi_n - e_n) = 0.$
\end{proof}

Define a linear map $P \colon \mathcal{F}(F_k) \rightarrow \mathcal{F}_r(F_k)$ by
\[ P(f) (x) = \frac{1}{e_n} \sum_{w \in E_n} f(w), \]
where $\vert x \vert = n.$

Our next two results show that in order to prove Theorem \ref{sufficienttwocirc} we will only need to consider radial functions.
\begin{Lem} \label{factorthrough}
Let $f \in \mathcal{F}(F_k)$. Then $P(f \ast \chi_n) = P(f) \ast \chi_n$ for all $n \in \mathbb{N}.$
\end{Lem}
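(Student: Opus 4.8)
The plan is to express everything in terms of \emph{sphere sums}: for $g\in\mathcal F(F_k)$ and an integer $j\ge 0$ write $S_j(g)=\sum_{w\in E_j}g(w)$. Then $(Pg)(w)=S_{|w|}(g)/e_{|w|}$, and a radial function $h$ is recovered from its sphere sums via $h=\sum_{j\ge 0}\bigl(S_j(h)/e_j\bigr)\chi_j$. So it suffices to show that $P(f\ast\chi_n)$ and $P(f)\ast\chi_n$ are both radial and have the same sphere sums. The first is radial by the definition of $P$. For the second, recall from the proof of Lemma~\ref{connecttozerodiv} that $(g\ast\chi_n)(x)=\sum_{a\in E_n}g(xa^{-1})$; when $g$ is radial this equals $\sum_{\ell\ge 0}\bigl(S_\ell(g)/e_\ell\bigr)\langle x\ast\chi_n,\chi_\ell\rangle$, which depends only on $|x|$ by Lemma~\ref{elementary}, so $P(f)\ast\chi_n$ is radial.

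Next I would record two facts. First, $S_j(Pg)=\sum_{w\in E_j}S_j(g)/e_j=S_j(g)$ for every $g\in\mathcal F(F_k)$; in particular $S_j\bigl(P(f\ast\chi_n)\bigr)=S_j(f\ast\chi_n)$. Second, for any $g\in\mathcal F(F_k)$, substituting $w=xa^{-1}$ in the formula above gives
\[ S_j(g\ast\chi_n)=\sum_{x\in E_j}\sum_{a\in E_n}g(xa^{-1})=\sum_{w\in F_k}g(w)\sum_{a\in E_n}\chi_j(wa)=\sum_{w\in F_k}g(w)\,\langle w\ast\chi_n,\chi_j\rangle . \]
By Lemma~\ref{elementary} the quantity $\langle w\ast\chi_n,\chi_j\rangle$ depends only on $|w|$; denote it by $N_\ell$ when $|w|=\ell$. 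Since $\langle w\ast\chi_n,\chi_j\rangle=0$ once $|w|>j+n$, grouping the last sum by $\ell=|w|$ yields the identity
\[ S_j(g\ast\chi_n)=\sum_{\ell\ge 0}N_\ell\,S_\ell(g), \]
valid (as a finite sum) for all $g\in\mathcal F(F_k)$.

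Finally I would apply this identity twice. Taking $g=f$ gives $S_j(f\ast\chi_n)=\sum_\ell N_\ell S_\ell(f)$; taking $g=P(f)$ and using $S_\ell(P(f))=S_\ell(f)$ from the first fact gives $S_j\bigl(P(f)\ast\chi_n\bigr)=\sum_\ell N_\ell S_\ell(f)$ as well. Combining with $S_j\bigl(P(f\ast\chi_n)\bigr)=S_j(f\ast\chi_n)$ shows $S_j\bigl(P(f\ast\chi_n)\bigr)=S_j\bigl(P(f)\ast\chi_n\bigr)$ for every $j$, and since both sides are radial they coincide. I expect the only delicate point to be the middle display: keeping straight that $\chi_n$ stands on the right in $g\ast\chi_n$ (so that it is built from the right translates $x\mapsto g(xa^{-1})$, $a\in E_n$), and invoking Lemma~\ref{elementary} precisely in the form ``$\langle w\ast\chi_n,\chi_j\rangle$ is constant on each sphere''; once that is set up, everything else is a reindexing of finite sums.
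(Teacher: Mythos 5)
Your proof is correct, but it takes a genuinely different route from the paper's. The paper first establishes the case $n=1$ by a direct combinatorial count on the tree (each $g\in E_{m+1}$ factors uniquely as $wy$ with $w\in E_m$, $y\in E_1$, while each $g\in E_{m-1}$ factors in $2k-1$ ways), and then propagates to general $n$ by induction using the algebra relations $\chi_1\ast\chi_1=\chi_2+2k\chi_0$ and $\chi_1\ast\chi_n=\chi_{n+1}+\omega^2\chi_{n-1}$. You instead handle all $n$ at once: both sides are radial (for $P(f)\ast\chi_n$ this uses Lemma~\ref{elementary}), a radial function is determined by its sphere sums $S_j$, and the identity $S_j(g\ast\chi_n)=\sum_\ell \langle w\ast\chi_n,\chi_j\rangle\big|_{|w|=\ell}\,S_\ell(g)$ — valid because $\langle w\ast\chi_n,\chi_j\rangle$ is constant on spheres and vanishes for $|w|>j+n$ — together with $S_\ell(P(f))=S_\ell(f)$ forces the two sides to have identical sphere sums. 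Each step checks out, including the convolution convention $(g\ast\chi_n)(x)=\sum_{a\in E_n}g(xa^{-1})$ and the finiteness of the sums. What your approach buys is uniformity in $n$ (no induction, no explicit tree combinatorics) and in fact it shows the stronger statement $P(f\ast\alpha)=P(f)\ast\alpha$ for any $\alpha\in(\mathbb{C}F_k)_r$; the cost is that the full weight rests on Lemma~\ref{elementary}, which the paper states without proof by reference to a modification of an external lemma, whereas the paper's $n=1$ computation is self-contained.
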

\begin{proof}
Let $n = 1$ and let $x \in F_k$. Suppose $\vert x \vert = m$. Then 
\begin{eqnarray*}
P(f \ast \chi_1) (x)  &  =  &  \frac{1}{e_m} \sum_{w \in E_m} f \ast \chi_1 (w) \\
                                 & =  & \frac{1}{e_m} \sum_{w \in E_m} \sum_{y \in F_k} f(wy^{-1}) \chi_1 (y) \\
                                & =  & \frac{1}{e_m} \sum_{w \in E_m} \sum_{y \in E_1} f(wy).  
\end{eqnarray*}
If $w \in E_m$ and $y \in E_1$, then $\vert wy \vert = m+1$ or $m-1$. For each $g \in E_{m+1}$, there is exactly one element $w \in E_m$ and one element $y \in \chi_1$ such that $g = wy$. If $g \in E_{m-1}$, then there is exactly $2k-1$ elements $w \in E_m$ such that $g = wy$ for some $y \in \chi_1.$ Consequently,
\begin{eqnarray*}
\frac{1}{e_m} \sum_{w \in E_m} \sum_{y \in E_1} f(wy) &  =  &  \frac{1}{e_m} \left( \sum_{w \in E_{m+1}} f(w) + (2k-1) \sum_{w \in E_{m-1} }f(w) \right) \\
                                 & = & \frac{2k-1}{e_{m+1}} \sum_{w \in E_{m+1}} f(w) + \frac{1}{e_{m-1}} \sum_{w \in E_{m-1}} f(w) \\
                                & = & \sum_{y \in E_1} P(f)(xy) \\
                                & = &  P(f) \ast \chi_1 (x). 
\end{eqnarray*}
We now consider the case $ n = 2$.
\begin{eqnarray*}
P(f \ast \chi_2)  & = & P(f \ast \chi_1 \ast \chi_1 - 2k f ) \\
                            & = & P(f \ast \chi_1) \ast \chi_1 - 2k P(f) \\
                           &  =  &  P(f) \ast \chi_1 \ast \chi_1 - 2k P(f) \\
                          & =   & P(f) \ast ( \chi_1 \ast \chi_1 - 2k \chi_0) \\
                           & = &  P(f) \ast \chi_2.
\end{eqnarray*}
It now follows from induction that $P(f \ast \chi_n) = P(f) \ast \chi_n$ for all $n \in \mathbb{N}$.
\end{proof}

\begin{Prop} \label{reducetoradial}
If $f \ast (\chi_n - e_n) =0$, then $P(f) \ast (\chi_n - e_n) =0.$
\end{Prop}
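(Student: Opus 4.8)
The plan is to obtain Proposition \ref{reducetoradial} directly from Lemma \ref{factorthrough} together with the linearity of the averaging operator $P$, so there is essentially no new work to do. First I would reread the hypothesis correctly: the symbol $e_n$ in $\chi_n - e_n$ stands for the scalar $e_n$ times the identity $\chi_0$ of the group ring, so that $f \ast (\chi_n - e_n) = 0$ is precisely the identity $f \ast \chi_n = e_n f$ in $\mathcal{F}(F_k)$. (This convolution is legitimate even though $f$ need not have finite support, since $\chi_n \in \mathbb{C}F_k$ has finite support and hence each coefficient of $f \ast \chi_n$ is a finite sum.)

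Next I would apply the linear map $P$ to both sides of $f \ast \chi_n = e_n f$. On the right-hand side linearity gives $P(e_n f) = e_n P(f)$. On the left-hand side Lemma \ref{factorthrough} gives $P(f \ast \chi_n) = P(f) \ast \chi_n$. Equating the two computations of $P(f \ast \chi_n)$ yields $P(f) \ast \chi_n = e_n P(f)$, which is exactly $P(f) \ast (\chi_n - e_n) = 0$, the desired conclusion.

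I do not anticipate any real obstacle here: all the substantive content already sits in Lemma \ref{factorthrough}, namely that $P$ commutes with right convolution by $\chi_n$, which was established by the explicit count of how each element of $E_{m+1}$ or $E_{m-1}$ arises as a product $wy$ with $w \in E_m$, $y \in E_1$, followed by the induction driven by the relations $\chi_1 \ast \chi_1 = \chi_2 + 2k\,\chi_0$ and $\chi_1 \ast \chi_n = \chi_{n+1} + \omega^2 \chi_{n-1}$. The only points meriting care in the present argument are the harmless identification of the scalar $e_n$ with $e_n \chi_0$ and the invocation of Lemma \ref{factorthrough} to move $P$ across the convolution; once these are in place the proof is a one-line manipulation. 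This is exactly what is needed to reduce Theorem \ref{sufficienttwocirc} to the radial case, since applying the proposition for both $n$ and $m$ shows that $P(f) \in \mathcal{F}_r(F_k)$ satisfies the two mean-value equations whenever $f$ does.
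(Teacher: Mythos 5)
Your proof is correct and takes essentially the same route as the paper's: both arguments hinge on Lemma \ref{factorthrough} to commute $P$ with right convolution by $\chi_n$, after which the hypothesis $f \ast \chi_n = e_n f$ and the linearity of $P$ finish the job. The paper merely writes this out pointwise, evaluating $P(f)\ast\chi_n$ at an $x$ with $\vert x \vert = m$ and using $\sum_{y\in E_n} f(wy) = e_n f(w)$ inside the average, which is the same computation in coordinates.
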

\begin{proof}
Let $x \in F_k$ and assume $\vert x \vert = m$. By using Lemma \ref{factorthrough} and Lemma \ref{connecttozerodiv}, we obtain
\begin{eqnarray*}
P(f) \ast \chi_n (x) & = & \frac{1}{e_m} \sum_{w \in E_m} f\ast \chi_n (w) \\
                            & = &  \frac{1}{e_m} \sum_{w \in E_m} \left( \sum_{y \in E_n} f(wy) \right) \\
                            & = & \frac{1}{e_m} \sum_{w \in E_m} e_n f(w) \\
                            & = & \frac{e_n}{e_m} \sum_{w\in E_m} f(w) \\
                           & = & e_n P(f)(x).
\end{eqnarray*}
Thus, $P(f) \ast (\chi_n - e_n) =0.$
\end{proof}

Our next result shows that $z = 2k$ is indeed a simple root of $p_n(z) -e_n =0$ for all $n \in \mathbb{N}.$

\begin{Lem}\label{simpleroot} 
For all nonnegative integers $n, z =2k$ is a root of multiplicity one for $p_n(z) - e_n = 0$.
\end{Lem}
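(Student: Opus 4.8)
The plan is to verify directly that $p_n(2k) = e_n$ and $p_n'(2k) \neq 0$ for all $n \geq 0$, where $p_n$ is defined by the recursion (\ref{eq:recpolynomials}). For the first claim, I would argue by induction on $n$. The base cases $p_0(2k) = 1 = e_0$, $p_1(2k) = 2k = e_1$, and $p_2(2k) = (2k)^2 - 2k = 2k(2k-1) = e_2$ are immediate. For the inductive step with $n \geq 2$, using $p_{n+1}(z) = z p_n(z) - \omega^2 p_{n-1}(z)$ and $\omega^2 = 2k-1$, we get $p_{n+1}(2k) = 2k\,e_n - (2k-1)e_{n-1} = 2k(2k-1)^{n-1}\cdot 2k - (2k-1)\cdot 2k(2k-1)^{n-2} = 2k(2k-1)^{n-1}\big((2k) - 1\big) = 2k(2k-1)^n = e_{n+1}$, which closes the induction. (A small separate check handles the transition from $n=1$ to $n=2$ against the $p_2$ formula, which I did above.)

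For the multiplicity-one claim I would differentiate the recursion and track the values $q_n := p_n'(2k)$. Differentiating $p_{n+1}(z) = z p_n(z) - \omega^2 p_{n-1}(z)$ gives $p_{n+1}'(z) = p_n(z) + z p_n'(z) - \omega^2 p_{n-1}'(z)$, so evaluating at $z = 2k$ and using $p_n(2k) = e_n$ yields the linear recursion $q_{n+1} = e_n + 2k\,q_n - (2k-1)q_{n-1}$ for $n \geq 2$, with initial data $q_0 = 0$, $q_1 = 1$, $q_2 = 4k$ (from $p_2'(z) = 2z$). Since all the $e_n$ are positive and $q_1, q_2 > 0$, an easy induction shows $q_n > 0$ for all $n \geq 1$: assuming $q_{n-1}, q_n > 0$ is not quite enough because of the negative coefficient $-(2k-1)$, so I would instead prove the stronger monotonicity-type statement that $q_{n+1} \geq q_n > 0$, or more cleanly that $q_{n+1} - q_n \geq e_n - (2k-2)q_{n-1} \geq \dots$; the cleanest route is probably to show by induction that $q_n \geq (2k-1) q_{n-1}$ for $n \geq 1$, which combined with $q_{n+1} = e_n + 2k q_n - (2k-1)q_{n-1} \geq e_n + 2k q_n - q_n = e_n + (2k-1)q_n \geq (2k-1)q_n$ propagates the bound and simultaneously forces $q_n > 0$. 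In particular $q_n = p_n'(2k) \neq 0$, so $2k$ is a simple root of $p_n(z) - e_n$.

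The main obstacle is the sign bookkeeping for $q_n = p_n'(2k)$: the recursion for $q_n$ has a negative coefficient, so naive two-term induction on positivity fails, and one must find the right strengthened invariant (such as $q_n \geq (2k-1)q_{n-1}$, or equivalently a lower bound growing geometrically). Once that invariant is identified, everything else is a routine induction. An alternative that sidesteps the differentiation entirely would be to use the closed form of $p_n$ in terms of Chebyshev-like polynomials — writing $p_n(2\omega\cos\theta)$ in terms of $\cos n\theta$, $\sin n\theta$ and locating $z = 2k$ at the edge $\cos\theta \to$ (the relevant endpoint) — but this introduces analytic machinery not developed in the excerpt, so I would prefer the purely recursive argument above.
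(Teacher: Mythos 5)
Your proposal is correct and follows essentially the same route as the paper: an induction showing $p_n(2k)=e_n$, then differentiating the recursion and propagating a strengthened monotonicity-type invariant for $q_n=p_n'(2k)$ to force $q_n>0$. The paper's invariant is $p_{n+1}'(2k)\geq p_n'(2k)$ rather than your $q_n\geq(2k-1)q_{n-1}$, an immaterial difference.
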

\begin{proof}
Clearly $p_0(2k) = e_0$ and $p_1(2k) = e_1$. Let $n \geq 1$ and assume $p_m(2k) = e_m$ for all $m \leq n$. Then,
\begin{eqnarray*}
p_{n+1}(2k)   &  =  &  2kp_n(2k) - (2k-1) p_{n-1} (2k) \\
                        & =   &  (2k-1)^{n-1} (2k (2k-1))    \\
                       & =   &  e_{n+1},
\end{eqnarray*}
so $z = 2k$ is a root of $p_n(z) - e_n = 0$ for all $n \in \mathbb{N}.$ 

We will now show that $p_{n+1}'(2k) \geq p_n'(2k)$ for all $n \geq 1$. This will establish that $z = 2k$ is a root of multiplicity one for $p_n(z) -e_n =0$ because $p_1'(2k) = 1$ and $(p_n - e_n)'(2k) = p_n'(2k)$ for all $n \geq 1$. Obviously, $p_2'(2k) \geq p_1'(2k).$ For some $ n \geq 2$ assume $p_n' (2k) \geq p_{n-1}'(2k).$ Thus 
\begin{eqnarray*}
p_{n+1}' (2k) & = & p_n(2k) + 2k p_n'(2k) - (2k-1) p_{n-1}'(2k)  \\
                       & > & 2k p_n'(2k) - (2k-1) p_{n-1}'(2k) \\
                       & \geq & 2k p_n'(2k) - (2k-1) p_n'(2k)  \\
                       & = & p_n'(2k). 
\end{eqnarray*}
Hence, $p_{n+1}'(2k) > p_n'(2k)$ for all $n \geq 1$. Thus $z = 2k$ is a root of order one for $p_n(z) - e_n = 0$. 
\end{proof}

We now prove Theorem \ref{sufficienttwocirc}. Assume there exists distinct natural numbers $n$ and $m$ such that $f \ast (\chi_n - e_n) = 0 = f \ast (\chi_m - e_m)$ for some nonzero $f \in \mathcal{F}(F_k)$. There exists an $x \in F_k$ such that $f(x) \neq 0.$ Replacing $f$ by $x^{-1} \ast f$ if necessary, we may assume $f(e) \neq 0$. Hence, $P(f) \neq 0.$ By Proposition \ref{reducetoradial} we can and do assume that $f \in \mathcal{F}_r (F_k)$. By Lemma \ref{simpleroot}, $2k \in Z(\chi_n - e_n) \cap Z(\chi_m - e_m)$. In the proof of Proposition \ref{maximalideals} it was shown there was an isomorphism between $(\mathbb{C}F_k)_r$ and $\mathbb{C}[x]$. It now follows that there exists $\alpha_1, \alpha_2 \in (\mathbb{C}F_k)_r$ for which 
\[ \chi_n - e_n = (\chi_1 - 2k)\ast \alpha_1 \mbox{ and } \chi_m - e_m = (\chi_1 - 2k)\ast \alpha_2. \]
Lemma \ref{simpleroot} implies $2k \notin Z(\alpha_1)$ and $2k \notin Z(\alpha_2).$  It now follows from the hypothesis that $Z(\alpha_1) \cap Z(\alpha_2) = \emptyset$. Let $I$ be the ideal in $(\mathbb{C}F_k)_r$ generated by $\alpha_1$ and $\alpha_2$. Theorem \ref{wienertypefree} says that $I = (\mathbb{C}F_k)_r$ since $Z(I) = \emptyset$. In particular, $\chi_0 \in I$. Consequently, $\chi_1 - 2k$ is an element of the ideal generated by $\chi_n - e_n = (\chi_1 - 2k) \ast \alpha_1$ and $\chi_m - e_m = (\chi_1 - 2k)\ast \alpha_2$. Therefore, $(\chi_1 - 2k) \ast f = 0$ and $f$ is harmonic.

Conversely, assume $f \in \mathcal{F}(F_k)$ is harmonic. Since $\chi_1 - 2k$ is a factor of $\chi_n - e_n$, it follows immediately that $f\ast (\chi_n - e_n) = 0$ for all $n \geq 1$. The proof of Theorem \ref{sufficienttwocirc} is now complete.

An interesting question is the following: What pairs of natural numbers $n$ and $m$ satisfy the hypothesis of Theorem \ref{sufficienttwocirc}? The proof of Lemma \ref{simpleroot} also demonstrates that $p_n(-2k) - e_n =0$ for all even $n \in \mathbb{N}$. This is also true because $p_n(z) - e_n$ is an even polynomial for all even $n$. Now 
\[ \phi_{-2k} (x) = \left\{ \begin{array}{rl}
1      &  \vert x \vert \mbox{ is even} \\
-1    &  \vert x \vert \mbox{ is odd. }\end{array} \right. \]
For even $n \in \mathbb{N}, -2k \in Z(\chi_n - e_n)$ since $\widehat{\chi_n - e_n} (-2k) = p_n(-2k) - e_n =0$. It now follows from Corollary \ref{containzeros} that if both $n$ and $m$ are even, then
\[ \phi_{-2k} \ast (\chi_n - e_n) = 0 = \phi_{-2k} \ast (\chi_m - e_m). \]
However, $\phi_{-2k}$ is not harmonic because $\phi_{-2k} \ast (\chi_1 - 2k) \neq 0$. Thus the hypothesis of Theorem \ref{sufficienttwocirc} are not satisfied if $n$ and $m$ are both even.

Equation (\ref{eq:contmeanvalue}) was studied in the setting of homogeneous trees in \cite{GonzalezVieli98}. This paper is relevant here since the Cayley graph of $F_k$ is a homogeneous tree of degree $2k$. The main result in \cite{GonzalezVieli98} shows that the hypothesis of Theorem \ref{sufficienttwocirc} is true when $n$ and $m$ are relatively prime. All this leads us to ask if the hypothesis of Theorem \ref{sufficienttwocirc} is true when $n$ and $m$ are both not even?
\bibliographystyle{plain}
\bibliography{mvppompeiudiscretegroups}

\begin{thebibliography}{10}

\bibitem{BerensteinZalcman80}
Carlos~A. Berenstein and Lawrence Zalcman.
\newblock Pompeiu's problem on symmetric spaces.
\newblock {\em Comment. Math. Helv.}, 55(4):593--621, 1980.

\bibitem{CareyKaniuthMoran91}
Alan~L. Carey, Eberhard Kaniuth, and William Moran.
\newblock The {P}ompeiu problem for groups.
\newblock {\em Math. Proc. Cambridge Philos. Soc.}, 109(1):45--58, 1991.

\bibitem{Cohen81}
Joel~M. Cohen.
\newblock von {N}eumann dimension and the homology of covering spaces.
\newblock {\em Quart. J. Math. Oxford Ser. (2)}, 30(118):133--142, 1979.

\bibitem{CohenPicardello88}
Joel~M. Cohen and Massimo~A. Picardello.
\newblock The {$2$}-circle and {$2$}-disk problems on trees.
\newblock {\em Israel J. Math.}, 64(1):73--86, 1988.

\bibitem{Figa_Picardello83}
Alessandro Fig{\`a}-Talamanca and Massimo~A. Picardello.
\newblock {\em Harmonic analysis on free groups}, volume~87 of {\em Lecture
  Notes in Pure and Applied Mathematics}.
\newblock Marcel Dekker Inc., New York, 1983.

\bibitem{GonzalezVieli98}
Francisco~Javier Gonz{\'a}lez~Vieli.
\newblock A two-spheres problem on homogeneous trees.
\newblock {\em Combinatorica}, 18(2):185--189, 1998.

\bibitem{laczkovichgszekelyhidi04}
M.~Laczkovich and G.~Sz{\'e}kelyhidi.
\newblock Harmonic analysis on discrete abelian groups.
\newblock {\em Proc. Amer. Math. Soc.}, 133(6):1581--1586 (electronic), 2005.

\bibitem{Linnell91}
P.~A. Linnell.
\newblock Zero divisors and group von {N}eumann algebras.
\newblock {\em Pacific J. Math.}, 149(2):349--363, 1991.

\bibitem{Linnell92}
Peter~A. Linnell.
\newblock Zero divisors and {$L^2(G)$}.
\newblock {\em C. R. Acad. Sci. Paris S\'er. I Math.}, 315(1):49--53, 1992.

\bibitem{LinnellPuls01}
Peter~A. Linnell and Michael~J. Puls.
\newblock Zero divisors and {$L^p(G)$}. {II}.
\newblock {\em New York J. Math.}, 7:49--58 (electronic), 2001.

\bibitem{PeyerimhoffSamior10}
Norbert Peyerimhoff and Evangelia Samiou.
\newblock Spherical spectral synthesis and two-radius theorems on
  {D}amek-{R}icci spaces.
\newblock {\em Ark. Mat.}, 48(1):131--147, 2010.

\bibitem{Puls98}
Michael~J. Puls.
\newblock Zero divisors and {$L^p(G)$}.
\newblock {\em Proc. Amer. Math. Soc.}, 126(3):721--728, 1998.

\bibitem{RawatSitaram97}
Rama Rawat and Alladi Sitaram.
\newblock The injectivity of the {P}ompeiu transform and {$L^p$}-analogues of
  the {W}iener-{T}auberian theorem.
\newblock {\em Israel J. Math.}, 91(1-3):307--316, 1995.

\bibitem{ScottSitaram88}
David Scott and Alladi Sitaram.
\newblock Some remarks on the {P}ompeiu problem for groups.
\newblock {\em Proc. Amer. Math. Soc.}, 104(4):1261--1266, 1988.

\bibitem{Zalcman80}
Lawrence Zalcman.
\newblock Offbeat integral geometry.
\newblock {\em Amer. Math. Monthly}, 87(3):161--175, 1980.

\end{thebibliography}

\end{document}